\theoremstyle{plain}
\newtheorem{thm}{Theorem}[section]
\newtheorem{lemma}[thm]{Lemma}
\newtheorem{cor}[thm]{Corollary}
\newtheorem{conj}[thm]{Conjecture}
\newtheorem{prop}[thm]{Proposition}
\theoremstyle{definition}
\newtheorem{definition}[thm]{Definition}
\newtheorem{remark}[thm]{Remark}
\newtheorem{alg}[thm]{Algorithm}
\newtheorem{example}[thm]{Example}
\def\dim{\mathop{\hbox {dim}}\nolimits}
\def\det{\mathop{\hbox {det}}\nolimits}
\def\im{\mathop{\hbox {Im}}\nolimits}
\def\ker{\mathop{\hbox{Ker}}\nolimits}
\def\aq{A_{\mathfrak{q}}}
\newcommand{\fra}{\mathfrak{a}}
\newcommand{\frg}{\mathfrak{g}}
\newcommand{\frh}{\mathfrak{h}}
\newcommand{\frk}{\mathfrak{k}}
\newcommand{\frl}{\mathfrak{l}}
\newcommand{\frp}{\mathfrak{p}}
\newcommand{\frq}{\mathfrak{q}}
\newcommand{\frs}{\mathfrak{s}}
\newcommand{\frt}{\mathfrak{t}}
\newcommand{\fru}{\mathfrak{u}}
\newcommand{\bbC}{\mathbb{C}}
\newcommand{\bbR}{\mathbb{R}}
\newcommand{\caL}{\mathcal{L}}
\newcommand{\caR}{\mathcal{R}}
\newcommand{\be}{\begin {equation}}
\newcommand{\ee}{\end {equation}}
\newcommand{\bp}{\begin {proof}}
\newcommand{\ep}{\end {proof}}
\begin{document}

\title[On the Dirac series of $U(p,q)$]{On the Dirac series of  $U(p,q)$}
\author{Chao-Ping Dong}
\author{Kayue Daniel Wong}

\address[Dong]{School of Mathematical Sciences, Soochow University, Suzhou 215006,
P.~R.~China}
\email{chaopindong@163.com}

\address[Wong]{School of Science and Engineering, The Chinese University of Hong Kong, Shenzhen,
Guangdong 518172, P. R. China}
\email{kayue.wong@gmail.com}

\abstract{This paper computes the Dirac index of all the weakly fair $\aq(\lambda)$ modules of $U(p, q)$. We find counter-examples to a conjecture of Vogan on the unitary dual of $U(p, q)$, which was phrased by Trapa in 2001. However, we still believe that any irreducible unitary representation of $U(p, q)$ with non-zero Dirac cohomology must be a weakly fair $\aq(\lambda)$ module.} 
 \endabstract

\subjclass[2010]{Primary 22E46.}

\keywords{Dirac cohomology, Dirac index, scattered representation, unitary representation.}

\maketitle
\section{Introduction}
Let $G$ be a connected linear Lie group with Cartan involution $\theta$. Assume that $K:=G^\theta$ is a maximal compact subgroup of $G$. Let $T$ be a maximal torus of $K$.
 Let $\frg_0={\rm Lie}(G)$,  $\frk_0={\rm Lie}(K)$, and $\frt_{0}={\rm Lie}(T)$.  Let
 $$
 \frg_0=\frk_0\oplus\frp_0
 $$
 be the Cartan decomposition on the Lie algebra level.
 Let $\fra_{0}$ be the centralizer of $\frt_{0}$ in $\frp_0$, and put $A=\exp(\fra_{0})$. Then $H=TA$ is the unique $\theta$-stable Cartan subgroup of $G$ which is maximally compact. On the Lie algebra level,
 $$
 \frh_{0}:=\frt_{0}\oplus\fra_{0}
 $$
 is called the fundamental Cartan subalgebra of $\frg_0$. Put $\frt_{\bbR}=i\frt_0$, $\frt_{\bbR}^*=i\frt_0^*$ and
 $$
 \frh_{\bbR}^*=i\frt_0^* \oplus \fra_0^*.
 $$
As usual, we drop the subscripts to stand for the complexified Lie algebras. That is, $\frg$ means $\frg_0\otimes_{\bbR} \bbC$ and so on. We fix a non-degenerate invariant symmetric bilinear form $B$ on $\frg$. Then $\frk$ and $\frp$ are orthogonal to each other under $B$.

Fix an orthonormal basis $\{Z_1, \dots, Z_n\}$ of $\frp_0$ with respect to the inner product on $\frp_0$ induced by $B$. Let $U(\frg)$ be the universal enveloping algebra, and let $C(\frp)$ be the Clifford algebra. As introduced by Parthasarathy \cite{P1},
the \emph{Dirac operator} is defined as
\begin{equation}\label{Dirac-operator}
D:=\sum_{i=1}^n Z_i\otimes Z_i\in U(\frg)\otimes C(\frp).
\end{equation}
It is easy to check that $D$ does not depend on the choice of the orthonormal basis $\{Z_i\}_{i=1}^n$. Writing out $D^2$ carefully will lead one to Parthasarathy's Dirac operator inequality \cite{P2}, which is very effective in non-unitarity test. See \eqref{Dirac-inequality}.

Let ${\rm Spin}_G$ be a spin module for the Clifford algebra $C(\frp)$.
For any $(\frg, K)$-module $\pi$, the Dirac operator $D$ acts on $\pi\otimes {\rm Spin}_G$, and the \emph{Dirac cohomology} defined by Vogan \cite{Vog97} is the following $\widetilde{K}$-module:
\begin{equation}\label{Dirac-cohomology}
H_D(\pi):=\ker D/(\ker D\cap\im D).
\end{equation}
Here $\widetilde{K}$ is the spin double cover of $K$. That is,
$$
\widetilde{K}:=\{ (k, s) \in K \times {\rm Spin}(\frp_0)\mid {\rm Ad}(k)=p(s) \},
$$
where ${\rm Ad}: K\to SO(\frp_0)$ is the adjoint map, and $p: {\rm Spin}(\frp_0) \to SO(\frp_0)$ is the universal covering map. Fix a positive root system $\Delta^+(\frk, \frt)$, and denote the half sum of roots in $\Delta^+(\frk, \frt)$ by $\rho_c$. We will use $E_{\mu}$ to denote the $\frk$-type (that is, an irreducible representation of $\frk$) with highest weight $\mu$. Abuse the notation a bit, $E_{\mu}$ will also stand for the $K$-type as well as the $\widetilde{K}$-type with highest weight $\mu$. Fix a positive root system $\Delta^+(\frg, \frt)$ containing $\Delta^+(\frk, \frt)$, and denote the half sum of roots in $\Delta^+(\frg, \frt)$ by $\rho$.

One original motivation of introducing Dirac cohomology is that this new invariant should sharpen Parthasarathy's Dirac operator inequality, and thus help us to understand the \emph{unitary dual} $\widehat{G}$---the set of equivalence classes of irreducible unitary $(\frg ,K)$-modules. This turned out to be the case. Indeed, the following Vogan conjecture proved by Huang and Pand\v zi\'c \cite{HP} says that Dirac cohomology, whenever non-zero, refines the infinitesimal character of $\pi$. Moreover, if one specializes $\pi$ to be unitary, then it is not hard to extend Parthasarathy's Dirac operator inequality, see Theorem 3.5.2 of \cite{HP2}.

\begin{thm}\emph{(\textbf{Huang-Pand\v zi\'c} \cite{HP})}\label{thm-HP}
Let $\pi$ be any irreducible $(\frg, K)$-module with infinitesimal character $\Lambda\in\frh^*$. Assume that $H_D(\pi)$ is non-zero, and that $E_{\gamma}$ is contained in $H_D(X)$. Then $\Lambda$ is conjugate to $\gamma+\rho_c$ by some element in the Weyl group $W(\frg, \frh)$.
\end{thm}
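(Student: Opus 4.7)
The plan is to follow the strategy originating with Vogan and completed by Huang and Pand\v zi\'c: reduce the theorem to an algebraic statement about how the center $Z(\frg)$ acts on Dirac cohomology, via a natural map to $Z(\frk_\Delta)$. Here $\frk_\Delta \subset U(\frg)\otimes C(\frp)$ denotes the diagonally embedded copy of $\frk$ defined by $X \mapsto X\otimes 1 + 1\otimes \alpha(X)$, with $\alpha\colon \frk \to C(\frp)$ coming from the adjoint action of $\frk$ on $\frp$. The point is that $D$ supercommutes with every element of $\frk_\Delta$, so any $Z(\frk_\Delta)$-element descends to a well-defined operator on $H_D(\pi)$; to control the $Z(\frg)$-action one compares it with this $Z(\frk_\Delta)$-action modulo the Dirac differential.

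First I would record Parthasarathy's formula
\[
D^2 \;=\; -\,\Omega_\frg \otimes 1 \;+\; \Omega_{\frk_\Delta} \;+\; \bigl(\|\rho_c\|^2-\|\rho\|^2\bigr)\cdot 1,
\]
where $\Omega_\frg$ and $\Omega_{\frk_\Delta}$ are the Casimir elements of $\frg$ and of the diagonal $\frk$. This already shows that $D^2 \in Z(\frk_\Delta) + Z(\frg)\otimes 1$. The heart of the argument is to promote this to every element of $Z(\frg)$: for each $z\in Z(\frg)$ there exist $\zeta \in Z(\frk_\Delta)$ and $a \in (U(\frg)\otimes C(\frp))^K$ such that
\[
z\otimes 1 \;=\; \zeta + Da + aD.
\]
Conceptually, the supercommutator with $D$ is a differential on $(U(\frg)\otimes C(\frp))^K$, and its cohomology coincides with $Z(\frk_\Delta)$. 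One proves this by filtering $U(\frg)\otimes C(\frp)$ so that the associated graded object is $S(\frg)\otimes \bigwedge\frp$, verifying the statement at the graded level (where it becomes a Koszul-type computation), and then lifting back by induction on the filtration degree. A second, independent input is that the resulting map $\eta\colon Z(\frg) \to Z(\frk_\Delta)$ corresponds, under the Harish-Chandra isomorphisms for $\frg$ and $\frk$ and a choice of compatible positive systems on $\frh = \frt \oplus \fra$, to the natural restriction $S(\frh)^{W(\frg,\frh)} \hookrightarrow S(\frt)^{W(\frk,\frt)}$.

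Granting both, the theorem follows formally. Let $v \in \ker D$ be a vector whose class in $H_D(\pi)$ generates a copy of the $\widetilde{K}$-type $E_\gamma$, and let $z \in Z(\frg)$. Applying the decomposition to $v$, the $aD$ term vanishes (since $Dv=0$) and $Da(v) = (z\otimes 1)v - \zeta v$ lies in $\ker D \cap \im D$; hence, in $H_D(\pi)$,
\[
\chi_\Lambda(z)\,[v] \;=\; [\zeta\cdot v] \;=\; \chi_{\gamma+\rho_c}(\zeta)\,[v].
\]
The left equality uses that $\pi$ has infinitesimal character $\Lambda$ and the right uses that $\zeta \in Z(\frk_\Delta)$ acts on the irreducible $\widetilde{K}$-type $E_\gamma$ by the Harish-Chandra scalar $\chi_{\gamma+\rho_c}(\zeta)$. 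Since this holds for all $z$, and $\eta$ corresponds to the restriction of $W$-invariant polynomials on $\frh^*$, the characters $\chi_\Lambda$ and $\chi_{\gamma+\rho_c}\circ\eta$ coincide on $Z(\frg)$; by the Chevalley restriction theorem, $\Lambda$ and $\gamma+\rho_c$ lie in the same $W(\frg,\frh)$-orbit, which is the desired conclusion.

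The main obstacle is, as expected, the algebraic decomposition $z\otimes 1 = \zeta + Da + aD$. Unlike the rest of the argument it is not formal: it is a structural theorem about the $K$-invariants of $U(\frg)\otimes C(\frp)$ and requires the filtration/graded analysis sketched above. The secondary, but nontrivial, point is the identification of $\eta$ with the restriction map on Harish-Chandra sides; this can be handled by tracking how the Casimir elements map on both sides and then using that the Harish-Chandra image is generated, in an appropriate sense, by its degree-two part together with lower-order corrections controlled by the filtration. Once both ingredients are in hand, the Weyl-group conjugacy is immediate from Chevalley.
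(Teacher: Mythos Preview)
The paper does not prove this theorem at all: it is stated as Theorem~1.1 and attributed to Huang and Pand\v zi\'c \cite{HP}, with no argument given. So there is nothing in the paper to compare your proposal against.

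That said, your sketch is faithful to the original Huang--Pand\v zi\'c argument. The essential ingredients are exactly the ones you list: the diagonal embedding of $\frk$ in $U(\frg)\otimes C(\frp)$, Parthasarathy's formula for $D^2$, the structural theorem that the cohomology of the super-commutator with $D$ on $(U(\frg)\otimes C(\frp))^K$ is $Z(\frk_\Delta)$ (giving $z\otimes 1 = \zeta(z) + Da + aD$), and the identification of the induced map $\eta\colon Z(\frg)\to Z(\frk_\Delta)$ with restriction of $W$-invariant polynomials under the Harish-Chandra isomorphisms. Your deduction of the theorem from these inputs is correct. One small caution: your last paragraph suggests pinning down $\eta$ by ``tracking how the Casimir elements map'' and appealing to generation in degree two; in the actual Huang--Pand\v zi\'c paper the identification of $\eta$ is obtained directly from the filtration argument (one shows inductively that the leading term of $\zeta(z)$ matches that of the restriction map), not from any degree-two generation statement, which would not by itself suffice.
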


Many interesting representations such as the discrete series, and some $\aq(\lambda)$-modules \cite{HKP} (see Section \ref{sec-coho-ind}) turned out to have non-zero Dirac cohomology. Let us collect all the members of $\widehat{G}$ with non-zero Dirac cohomology as $\widehat{G}^d$, and call them the \emph{Dirac series} of $G$ as coined by Huang. This paper aims to study the Dirac series of $U(p,q)$. The recent research announcement \cite{BP19} suggests that the study here may be helpful for the theory of automorphic forms. One foundational tool for us is Theorem \ref{thm-HP}, the other one is Trapa's 2001 paper \cite{T1} which suggests that weakly fair $\aq(\lambda)$-modules (see \eqref{Aqlambda-weakly-fair}) should play a very important role in the unitary dual of $U(p, q)$. In particular, the following inspiring conjecture was stated there.

\begin{conj}\label{conj-Vogan} \emph{(\textbf{Vogan's Conjecture on $\widehat{U(p, q)}$})}\, The weakly fair $A_{\frq}(\lambda)$ modules exhaust the irreducible unitary $(\frg, K)$-modules for $U(p, q)$ whose infinitesimal character is a weight translate of $\rho$.
\end{conj}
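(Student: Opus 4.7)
The plan is to split Conjecture~\ref{conj-Vogan} into two assertions: (a) every weakly fair $\aq(\lambda)$ module of $U(p,q)$ is unitary with infinitesimal character a weight translate of $\rho$; and (b) conversely, every irreducible unitary $(\frg, K)$-module with infinitesimal character of this form is isomorphic to some weakly fair $\aq(\lambda)$. The hope would be that together they pin down the portion of $\widehat{U(p,q)}$ sitting over the ``integral translate of $\rho$'' block.

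Part (a) is essentially known. The unitarity of weakly fair $\aq(\lambda)$ modules is a theorem of Vogan, and the infinitesimal character of $\aq(\lambda)$ is $\lambda+\rho$, which is a weight translate of $\rho$ whenever $\lambda$ is integral. So the substantive content is (b).

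For (b), my approach would be as follows. First, parameterize all irreducible $(\frg, K)$-modules of $U(p,q)$ by Langlands data (equivalently, by lowest $K$-types via Vogan's classification), and restrict attention to those with infinitesimal character conjugate to $\rho+\mu$ for some integral weight $\mu$. Second, for each candidate $\pi$, test non-unitarity using Parthasarathy's Dirac operator inequality, reinforced by Theorem~\ref{thm-HP}: any surviving $\pi$ has an infinitesimal character compatible with having $E_\gamma$ in its Dirac cohomology for some $\widetilde K$-type weight $\gamma$, which drastically constrains the possible lowest $K$-types. Third, combine signature-character computations with the extensive structure theory available for $U(p,q)$ (theta-lift methods, Kazhdan-Lusztig-Vogan data, and the known translation-principle behaviour of derived functor modules) to identify the surviving unitary modules and attempt to match them with weakly fair $\aq(\lambda)$'s. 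A bookkeeping lemma matching the lowest $K$-type of $\aq(\lambda)$ with the remaining candidates would complete the argument.

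The main obstacle is the third step. There is no conceptual reason known to me that every irreducible unitary module of $U(p,q)$ with infinitesimal character a weight translate of $\rho$ must arise via cohomological induction from a $\theta$-stable parabolic; in real rank greater than one the unitary dual can contain genuinely sporadic representations, and signature computations for standard modules outside the cohomologically induced family are notoriously delicate. Consequently, even after the Dirac operator inequality and Theorem~\ref{thm-HP} have trimmed the candidate list, a case-by-case unitarity check of those not obviously of $\aq(\lambda)$ type is required, and this is precisely where one expects to either prove the conjecture or uncover counter-examples. Given the announcement in the abstract, I would anticipate that the hard step fails: the candidate list produced in step three contains additional unitary representations, and only the weaker statement---that any irreducible unitary representation with \emph{non-zero Dirac cohomology} is a weakly fair $\aq(\lambda)$---can be salvaged.
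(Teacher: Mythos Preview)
The statement is a \emph{conjecture}, and the paper does not prove it---it disproves it by explicit counter-example (Section~\ref{sec-Vogan-conj}). Your proposal outlines a hypothetical proof of part (b), flags step three as the likely failure point, and then, guided by the abstract, predicts that counter-examples exist. That prediction is correct, but a sketch of a proof one expects to fail is neither a proof nor a disproof; what is required here is an actual representation witnessing the failure, and your outline does not produce one.

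The paper's treatment is concrete and quite different in spirit. Using \texttt{atlas}, it lists all irreducible unitary representations of $SU(3,3)$ with infinitesimal character $\rho_c$ and full support, and finds one (parameter \texttt{x=210}) whose associated variety is a union of four $K$-nilpotent orbits; since every nonzero weakly fair $\aq(\lambda)$ for $U(p,q)$ has associated variety equal to a single orbit closure \cite{T2}, this module cannot be of that form. The counter-example is then identified conceptually as the irreducible degenerate principal series $\Sigma_p' = \mathrm{Ind}_P^G(\sigma^{p+1})$ of $SU(p,p)$ from the Siegel parabolic (irreducibility via \cite{J}), and a parity comparison between the $K$-types \eqref{eq-sigmaktypes} and $\rho_n$ shows $H_D(\Sigma_p')=0$, so Conjecture~\ref{conj-diracseries} survives. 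The decisive ingredients---the associated-variety obstruction, the explicit Siegel-parabolic construction, and the parity argument killing Dirac cohomology---are all absent from your proposal.
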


Take any irreducible unitary $\aq(\lambda)$ module $\pi$ of $U(p, q)$, we obtain the necessary conditions \eqref{hpcondition} for $H_D(\pi)$ to be non-zero in Lemma \ref{lemma-Aqlambda-nonzeroDirac} via Theorem \ref{thm-HP}. When $\pi$ is further assumed to be weakly fair, our main result Theorem \ref{thm-DI-SUpq-Aqlambda} says that \eqref{hpcondition} plus with \eqref{eq-inequality} are actually sufficient.
More precisely, Theorem \ref{thm-DI-SUpq-Aqlambda} computes the \emph{Dirac index} (see Section \ref{sec-DI}) of  all weakly fair $\aq(\lambda)$ modules of $U(p, q)$. It says that the Dirac index never vanishes whenever \eqref{hpcondition} and \eqref{eq-inequality} are both satisfied. Hence the Dirac cohomology never vanishes in such cases as well. Earlier, Barbasch and Pand\v zi\'c have studied the Dirac cohomology of some unipotent representations of $U(p, q)$ in \cite{BP15}. Theorem 5.3 there now fits nicely into the current setting as the special case described in Example \ref{exam-BP}.

In view of Lemma \ref{lemma-Aqlambda-nonzeroDirac}, there would be no gap from Theorem \ref{thm-DI-SUpq-Aqlambda} to the complete classification of $\widehat{U(p, q)}^d$ if Conjecture \ref{conj-Vogan} holds. Unfortunately, we find counter examples to Conjecture \ref{conj-Vogan} on $U(p, p)$. See Section \ref{sec-Vogan-conj}. Fortunately, these representations have zero Dirac cohomology. Thus they are not members of $\widehat{U(p, q)}^d$, and will not bother us. Moreover, using the algorithm in \cite{D17} and the software \texttt{atlas} \cite{ALTV,At}, we have carried out calculations on small rank groups up to $U(5, 5)$: there is no gap on each group. Thus we would like to make the following.

\begin{conj} \label{conj-diracseries} Any Dirac series of $U(p, q)$ must be a weakly fair $A_{\frq}(\lambda)$ module satisfying \eqref{hpcondition} of Lemma \ref{lemma-Aqlambda-nonzeroDirac}.
\end{conj}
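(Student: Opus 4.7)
The plan is to leverage the infinitesimal-character constraint from Theorem \ref{thm-HP} together with a structural analysis of the irreducible unitary dual of $U(p,q)$ at infinitesimal characters of the form $\gamma+\rho_c$. Note first that the trailing clause of Conjecture \ref{conj-diracseries}---satisfaction of \eqref{hpcondition}---is automatic once $\pi$ is known to be a weakly fair $A_\frq(\lambda)$ module with non-zero Dirac cohomology, by Lemma \ref{lemma-Aqlambda-nonzeroDirac}; so the real crux is to establish the ``weakly fair $A_\frq(\lambda)$'' part.

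As the first step, given $\pi\in\widehat{U(p,q)}^d$, I would pick a $\widetilde K$-type $E_\gamma \subset H_D(\pi)$. Theorem \ref{thm-HP} forces the infinitesimal character of $\pi$ to be $W(\frg,\frh)$-conjugate to $\gamma+\rho_c$; since $\gamma$ is a (genuine) $\widetilde K$-highest weight and $\rho_c$ has the right parity to compensate, a direct check on the root system of $\frgl(p+q,\bbC)$ shows that $\gamma+\rho_c$ is automatically a weight-translate of $\rho$. Thus $\pi$ falls within the class of representations targeted by Conjecture \ref{conj-Vogan}, and the remaining question reduces to whether $\pi$ can be one of the counter-examples exhibited in Section \ref{sec-Vogan-conj}.

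As the second step, one would argue that no such counter-example lies in $\widehat{U(p,q)}^d$. For each counter-example explicitly constructed in the paper this is verified by hand by showing $H_D(\pi)=0$; the task is to upgrade this to a general vanishing statement. A natural approach is to realize any unitary irreducible at a $\rho$-translate infinitesimal character as a composition factor of a standard module whose remaining factors are weakly fair $A_\frq(\lambda)$'s, and then exploit the additivity of the Dirac index along a composition series together with the explicit formula of Theorem \ref{thm-DI-SUpq-Aqlambda}. If one can show that the Dirac index of the standard module is accounted for entirely by the $A_\frq(\lambda)$ composition factors, the remaining ``exceptional'' pieces must have vanishing Dirac index; combined with a Parthasarathy-inequality bound on their lowest $K$-types this should force $H_D=0$ on them.

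The main obstacle, I expect, is the second step: a complete classification of non-weakly-fair unitary irreducibles at these infinitesimal characters is not currently available, and without it a uniform vanishing argument is elusive. A full proof would most likely proceed by first extending Trapa's analysis in \cite{T1} to enumerate all such ``extra'' unitary representations, and only then matching each against the Dirac index formula of Theorem \ref{thm-DI-SUpq-Aqlambda}. The computations based on \cite{D17} and \texttt{atlas} \cite{ALTV,At} through $U(5,5)$ offer strong evidence that no obstruction occurs, but a theoretical proof seems to require genuinely new input---either a refined cohomological-induction identity for the Dirac operator applied to these exceptional subquotients, or a sharpened Parthasarathy-type inequality that detects failure of the weakly fair $A_\frq(\lambda)$ property.
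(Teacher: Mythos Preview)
The statement is labelled a \emph{Conjecture} in the paper and the paper does not prove it; there is no ``paper's own proof'' against which to compare. The paper supplies only supporting evidence: Lemma~\ref{lemma-Aqlambda-nonzeroDirac} handles the trailing clause automatically (as you note), a direct parity argument in Section~\ref{sec-Vogan-conj} shows $H_D(\Sigma_p')=0$ for the one explicit family of counter-examples to Conjecture~\ref{conj-Vogan} that the paper constructs, and \texttt{atlas} computations are cited through $U(5,5)$.

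Your proposal is accordingly not a proof but a research outline, and you acknowledge as much in the final paragraph. That self-assessment is accurate, and the obstruction sits exactly where you place it. Step~1 is fine and mirrors the paper's own reduction to Conjecture~\ref{conj-Vogan}. Step~2 is the genuine gap. The mechanism you suggest---additivity of Dirac index along a composition series of a standard module, matched against Theorem~\ref{thm-DI-SUpq-Aqlambda}---can at best force the Dirac \emph{index} of an exceptional factor to vanish, not its Dirac \emph{cohomology}; the paper itself records (Example~\ref{eg-splitf4}, Remark~\ref{rmk-thm-DI-SUpq-Aqlambda}) that ${\rm DI}=0$ need not give $H_D=0$, so this line cannot close the gap without the auxiliary Parthasarathy-type bound you merely gesture at. By contrast, the paper disposes of its known exceptions $\Sigma_p'$ by a quite different and ad hoc argument: it lists all $K$-types of $\Sigma_p'$ via Frobenius reciprocity and uses an integer/half-integer parity mismatch between those $K$-types and $\rho_n$ to show that \eqref{eq-p-parity} has no solution. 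That argument does not generalise beyond $\Sigma_p'$, which is precisely why the paper leaves the statement as a conjecture.
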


The paper is organized as follows: Section 2 recalls the spin module and the unitarily small convex hull. Section 3 recalls cohomologically induced modules, in particular, $\aq(\lambda)$ modules in various ranges. Dirac index will be visited in Section 4.  We study Dirac indices of weakly fair $\aq(\lambda)$ modules of $U(p, q)$ in Section 5. Finally, we investigate Conjecture \ref{conj-Vogan} in Section 6. 

\section{Spin module and the u-small convex hull}
We assume that $G$ is simple for convenience, and adopt the notations from the introduction. In this section, we will collect materials pertaining to the spin module and the unitarily small convex hull, which are key ingredients in the study of Dirac cohomology and Dirac index.

Fix a positive root system $\Delta^+(\frk, \frt)$. Choose a positive root system $\Delta^+(\frg, \frt)$ which contains $\Delta^+(\frk, \frt)$. Denote the half sum of roots in $\Delta^+(\frg, \frt)$ (resp., $\Delta^+(\frk, \frt)$) by $\rho$ (resp., $\rho_c$). Put \begin{equation}\label{rhon}
\rho_n=\rho-\rho_c.
\end{equation}
Let $C_{\frg}(\frt_{\bbR}^*)$ (resp., $C_{\frk}(\frt_{\bbR}^*)$) be the dominant Weyl chamber corresponding to $\Delta^+(\frg, \frt)$ (resp., $\Delta^+(\frk, \frt)$). Put
\begin{equation}\label{Wgtone}
W(\frg,\frt)^1:=\{w\in W(\frg,\frt)\mid w  C_{\frg}(\frt_{\bbR}^*) \subseteq C_{\frk}(\frt_{\bbR}^*)\}.
\end{equation}
Then the multiplication map is a bijection from $W(\frk, \frt) \times W(\frg, \frt)^1$ onto $W(\frg, \frt)$. Put
$$
s=\frac{\# W(\frg, \frt)}{\# W(\frk, \frt)}.
$$
Let us enumerate the set  $W(\frg, \frt)^1$ as
\begin{equation}\label{Wgtone-label}
\{w^{(0)}=e, w^{(1)}, \dots, w^{(s-1)}\}.
\end{equation}
Then $w^{(j)}\Delta^+(\frg, \frt)$, $0\leq j\leq s-1$, are exactly all the  positive root systems of $\Delta(\frg, \frt)$ containing $\Delta^+(\frk, \frt)$. Denote by $\rho^{(j)}$ the half sum of roots in $w^{(j)}\Delta^+(\frg, \frt)$, and put
\begin{equation}\label{rhonj}
\rho_n^{(j)}=\rho^{(j)}-\rho_c.
\end{equation}
Note that $\rho^{(0)}=\rho$ and $\rho_n^{(0)}=\rho_n$.

Let us recall \cite[Lemma 9.3.2]{W}.

\begin{lemma}\label{lemma-spin-G}
Let $E_{\mu}$ be the $\frk$-type with highest weight $\mu$. Then
$$
{\rm Spin}_{G} \cong \bigoplus_{j=0}^{s-1} 2^{[l_0/2]} E_{\rho_n^{(j)}},
$$
where $l_0=\dim_{\bbC} \fra$ and $m E_{\mu}$ stands for a direct sum of $m$ copies of $E_{\mu}$.
\end{lemma}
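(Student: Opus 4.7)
The plan is to compute the $\widetilde T$-character of ${\rm Spin}_G$ directly and identify it with $2^{[l_0/2]}\sum_{j=0}^{s-1}\ch(E_{\rho_n^{(j)}})$. First I would analyze $\frp$ as a $\frt$-module: its zero weight space is the centralizer $\fra$ (of complex dimension $l_0$), while the nonzero weight spaces are the root spaces $\frg_\beta$ for noncompact $\beta\in\Delta(\frg,\frt)$, coming in paired contributions $\{\beta,-\beta\}$. This induces a graded tensor factorization $C(\frp)\cong C(\fra)\otimes C(\frp\ominus\fra)$ of Clifford algebras and a corresponding $\widetilde T$-equivariant decomposition ${\rm Spin}_G\cong{\rm Spin}(\fra)\otimes{\rm Spin}(\frp\ominus\fra)$, where the first factor has dimension $2^{[l_0/2]}$ and trivial $\frt$-action. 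Choosing the polarization $\frp^+=\bigoplus_{\beta\in\Delta_n^+}\frg_\beta$ allows one to identify ${\rm Spin}(\frp\ominus\fra)\cong\wedge\frp^+\otimes\bbC_{-\rho_n}$ as $\frt$-modules, yielding
$$
\ch_{\widetilde T}({\rm Spin}_G)\ =\ 2^{[l_0/2]}\prod_{\beta\in\Delta_n^+}\bigl(e^{\beta/2}+e^{-\beta/2}\bigr).
$$

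Next I would verify that each $\rho_n^{(j)}$ is a dominant integral weight for $\widetilde K$ and appears in the $\frt$-spectrum above. Dominance follows from the inequality $\langle\rho^{(j)},\alpha^\vee\rangle\ge 1=\langle\rho_c,\alpha^\vee\rangle$ for every simple compact root $\alpha$, which holds because $\alpha\in\Delta_c^+\subseteq w^{(j)}\Delta^+(\frg,\frt)$ forces $\alpha$ to be a root of the positive system defining $\rho^{(j)}$; subtracting gives $\langle\rho_n^{(j)},\alpha^\vee\rangle\ge 0$. Writing $\rho_n^{(j)}=\tfrac12\sum_{\beta\in\Delta_n\cap w^{(j)}\Delta^+(\frg,\frt)}\beta$ exhibits $\rho_n^{(j)}$ as a specific sign-pattern weight of ${\rm Spin}(\frp\ominus\fra)$, so it is present in the $\frt$-spectrum with multiplicity at least $2^{[l_0/2]}$.

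The decisive step is the character identity
$$
\prod_{\beta\in\Delta_n^+}\bigl(e^{\beta/2}+e^{-\beta/2}\bigr)\ =\ \sum_{j=0}^{s-1}\ch(E_{\rho_n^{(j)}}).
$$
After multiplying by the compact Weyl denominator $A_{\rho_c}=\sum_{w\in W(\frk,\frt)}\sgn(w)\,e^{w\rho_c}$ and applying the Weyl character formula on the right, this reduces to a polynomial identity that I would verify by expanding both sides as signed monomials and reorganizing them via the bijection $W(\frg,\frt)\simeq W(\frk,\frt)\times W(\frg,\frt)^1$. The hardest part will be the sign bookkeeping: the Weyl denominator formula yields the Dirac-index analog $\prod_\beta(e^{\beta/2}-e^{-\beta/2})=\sum_j\sgn(w^{(j)})\ch(E_{\rho_n^{(j)}})$ essentially immediately, whereas the uniform-sign version required here demands showing that the $\sgn(w^{(j)})$ factors are absorbed correctly when differences are replaced by sums on the noncompact factors. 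Once this identity is established, the lemma follows by matching the $\widetilde T$-characters of both sides and invoking highest weight theory for $\widetilde K$.
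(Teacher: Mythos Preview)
The paper does not actually prove this lemma: it is simply quoted from Wallach \cite[Lemma 9.3.2]{W}, so there is no ``paper's own proof'' to compare against. Your outline is essentially the classical argument (going back to Parthasarathy and reproduced in Wallach): factor off the $2^{[l_0/2]}$-dimensional trivial piece coming from $C(\fra)$, realize the rest as $\wedge\frp^+\otimes\bbC_{-\rho_n}$, and then establish the character identity for $\prod_{\beta\in\Delta_n^+}(e^{\beta/2}+e^{-\beta/2})$.

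One caution on the step you flag as hardest. After multiplying by the compact Weyl denominator $A_{\rho_c}$, the left side is \emph{not} literally a sum over $W(\frg,\frt)$ of $e^{w\rho}$, so ``reorganizing via the bijection $W(\frg,\frt)\simeq W(\frk,\frt)\times W(\frg,\frt)^1$'' is not quite the right description of what remains. The cleaner route is: the product $A_{\rho_c}\cdot\prod_{\beta\in\Delta_n^+}(e^{\beta/2}+e^{-\beta/2})$ is $W(\frk,\frt)$-skew, hence a $\bbZ$-combination of alternants $A_\mu=\sum_{w\in W(\frk,\frt)}\sgn(w)e^{w\mu}$ with $\mu$ dominant regular for $\Delta^+(\frk,\frt)$. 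Every monomial $e^\mu$ in the expansion has $\mu=\rho_c+\tfrac12\sum_{\beta\in\Delta_n^+}\epsilon_\beta\beta$, and one checks (this is the actual content of Wallach's argument) that such a $\mu$ is $\Delta^+(\frk,\frt)$-dominant regular precisely when the sign choice $\epsilon_\beta$ comes from one of the $s$ positive systems $w^{(j)}\Delta^+(\frg,\frt)$, in which case $\mu=\rho^{(j)}$ with coefficient $+1$. With that adjustment your plan goes through.
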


In \cite{D}, the spin norm of the $\frk$-type $\mu$ is defined as
\begin{equation}
\|\mu\|_{\rm spin}:=\min_{0\leq j\leq s-1} \|\{\mu- \rho_n^{(j)}\}+\rho_c\|.
\end{equation}
Here the norm $\|\cdot\|$ is induced by the restriction of $B$ to $\frt_{\bbR}$, and $\{\mu- \rho_n^{(j)}\}$ denotes the unique weight in the dominant Weyl chamber $C_{\frk}(\frt_{\bbR}^*)$ which is conjugate to $\mu- \rho_n^{(j)}$ under the action of the Weyl group $W(\frk, \frt)$. Parthasarathy's Dirac operator inequality can be encapsulated as follows: for any irreducible unitary $(\frg, K)$-module $\pi$ with infinitesimal character $\Lambda$, we have that
\begin{equation}\label{Dirac-inequality}
\min_{E_{\mu}}  \|\mu\|_{\rm spin} \geq \|\Lambda\|,
\end{equation}
where $E_{\mu}$ runs over all the $K$-types in $\pi$. Then Theorem 3.5.2 of \cite{HP2} extends Parthasarathy's Dirac operator inequality in the sense that equality in \eqref{Dirac-inequality} holds if and only if $H_D(\pi)$ is non-zero.

In the current setting, the convex hull generated by the points $\{2w\rho_n\mid w\in W(\frg, \frt)\}$ is the unitarily small polyhedron introduced by Salamanca-Riba and Vogan \cite{SV}. Its vertices within $C_{\frk}(\frt^*_{\bbR})$ are exactly $2\rho_n^{(j)}$, $0\leq j\leq s-1$. The $\frk$-type $E_{\mu}$ is called unitarily small (\emph{u-small} for short) if its highest weight $\mu$ lives in the unitarily small polyhedron. Many equivalent characterizations of u-small $\frk$-types are given in Theorem 6.7 of \cite{SV}.

Let $\xi_{1}, \dots, \xi_{{\rm rank} (\frg_0)}$ be the fundamental weights for a positive root system $\Delta^+(\frg, \frh)$ which restricts to $\Delta^+(\frg, \frt)$. Note that the Weyl group $W(\frg, \frt)$ can be identified as the subgroup $W_{\theta}$ of $W(\frg, \frh)$ consists of elements commuting with $\theta$. See for instance Section 2 of \cite{HKP}.  Motivated by Conjecture B of \cite{D17}, the following result generalizes Lemma 3.4 of \cite{DW}.

\begin{lemma}\label{lemma-inf-char-u-small}
Let $\Lambda=\sum_{i=1}^{{\rm rank}(\frg_0)} \lambda_i \xi_i $ be such that $0\leq \lambda_i\leq 1$ for each $1\leq i\leq {\rm rank}(\frg_0)$. Assume that $\Lambda\in\frt^*$.  Let $V_{\delta}$ be a $K$-type such that $\{\delta-\rho_n^{(j)}\}+\rho_c=w\Lambda$, for some  $0\leq j \leq s-1$ and some $w\in W(\frg, \frt)^1$.
Then $V_{\delta}$ must be u-small.
\end{lemma}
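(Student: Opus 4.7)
The plan is to establish the spin-norm inequality $\|\delta\|_{\rm spin} \leq \|\rho\|$, which via one of the equivalent characterizations of u-smallness in Theorem 6.7 of \cite{SV} forces $V_\delta$ to be u-small. Unlike a direct attempt to exhibit $\delta$ as a convex combination of the vertices $2w\rho_n$, this norm route avoids any explicit convex-combination bookkeeping and handles all pairs $(j,w)$ uniformly.

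Since the form $B$ is $W(\frg,\frt)$-invariant, the hypothesis $\{\delta-\rho_n^{(j)}\}+\rho_c = w\Lambda$ immediately bounds the spin norm of $\delta$ from above by plugging the given $j$ into the minimum defining $\|\delta\|_{\rm spin}$:
\[
\|\delta\|_{\rm spin} \;=\; \min_{0\leq j'\leq s-1}\bigl\|\{\delta-\rho_n^{(j')}\}+\rho_c\bigr\| \;\leq\; \|w\Lambda\| \;=\; \|\Lambda\|.
\]
Next, writing $\Lambda = \sum_{i=1}^{r}\lambda_i\xi_i$ and $\rho = \sum_{i=1}^{r}\xi_i$ with $r={\rm rank}(\frg_0)$, we expand
\[
\|\rho\|^2 - \|\Lambda\|^2 \;=\; \sum_{i,j}(1-\lambda_i\lambda_j)\,\langle \xi_i,\xi_j\rangle.
\]
A standard fact about finite root systems is that the Gram matrix $(\langle\xi_i,\xi_j\rangle)$, being a positive multiple of the inverse of the Cartan matrix, has entirely non-negative entries. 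Combined with $0\leq \lambda_i\leq 1$, every summand in the display above is non-negative, so $\|\Lambda\|\leq\|\rho\|$. Chaining the two inequalities gives $\|\delta\|_{\rm spin}\leq \|\rho\|$, completing the proof modulo the cited characterization.

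The main obstacle is the use of the characterization of u-smallness by the inequality $\|\mu\|_{\rm spin}\leq \|\rho\|$: among the equivalent formulations in Theorem 6.7 of \cite{SV}, this is the one I would cite or extract. In one direction, each vertex $2\rho_n^{(j')}$ of the u-small polyhedron satisfies $\{2\rho_n^{(j')}-\rho_n^{(j')}\}+\rho_c = \rho^{(j')}$, a weight of norm $\|\rho\|$, so all vertices lie on the sphere of radius $\|\rho\|$; the reverse implication requires a Parthasarathy-type argument showing that the spin norm cannot exceed $\|\rho\|$ on the convex hull. The non-negativity of $\langle\xi_i,\xi_j\rangle$ is a classical root-system fact, verifiable case-by-case using the tables of inverse Cartan matrices, and should be invoked without fanfare during the expansion step.
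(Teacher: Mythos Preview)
Your reduction to the inequality $\|\delta\|_{\rm spin}\leq\|\rho\|$ is correct, but the final step --- deducing u-smallness from this bound --- is a genuine gap, and your own discussion of it argues for the wrong direction.

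What you need is the implication ``$\|\mu\|_{\rm spin}\leq\|\rho\|\Rightarrow V_\mu$ is u-small''. This is \emph{not} among the equivalent conditions in Theorem~6.7 of \cite{SV}; those are linear (half-space) conditions such as part~(e), $\langle\mu-2\rho_n^{(j')},w^{(j')}\xi_i\rangle\leq 0$ for all $j',i$, and none of them is a spin-norm bound. Your paragraph on the ``main obstacle'' only sketches the converse: you observe that each vertex $2\rho_n^{(j')}$ has spin norm $\|\rho\|$ and then propose extending this to the convex hull. That would show u-small $\Rightarrow\|\mu\|_{\rm spin}\leq\|\rho\|$, which is irrelevant here (and is itself not immediate, since the spin norm is a minimum of convex functions, hence not convex).

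Worse, the implication you need fails at the level of real dominant weights, so no argument that ignores integrality can succeed. For $G=SU(2,1)$ with $\rho_c=(\tfrac12,-\tfrac12,0)$, $\rho_n^{(0)}=(\tfrac12,\tfrac12,-1)$, $\xi_1=(\tfrac23,-\tfrac13,-\tfrac13)$, take $\mu=(0,1,-1)+\sqrt{3}\,\xi_1$. Then $\mu$ is $\Delta^+(\frk,\frt)$-dominant, $\mu-\rho_n^{(0)}$ is already $\frk$-dominant, and $\|\{\mu-\rho_n^{(0)}\}+\rho_c\|=\|\mu-(0,1,-1)\|=\sqrt{3}\,\|\xi_1\|=\sqrt{2}=\|\rho\|$, so $\|\mu\|_{\rm spin}\leq\|\rho\|$. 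Yet $\langle\mu-2\rho_n^{(0)},\xi_1\rangle=-1+\tfrac{2\sqrt{3}}{3}>0$, so by Theorem~6.7(e) of \cite{SV} the weight $\mu$ is not u-small. Your argument uses only the bound $\|w\Lambda\|\leq\|\rho\|$ and never touches integrality, so it cannot distinguish $\delta$ from such a $\mu$.

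The paper's proof avoids this by working directly with characterization~(e). Writing $\delta=\rho_n^{(j)}+w_1(w\Lambda-\rho_c)$, it expands $\langle\delta-2\rho_n^{(j')},w^{(j')}\xi_i\rangle$ and bounds each piece using the fact that $\xi_i-\sigma\xi_i$ is a non-negative combination of positive roots for any $\sigma$ in the Weyl group (Lemma~5.5 of \cite{DH11}), together with the hypothesis $0\leq\lambda_i\leq 1$. This exploits the particular shape of $w\Lambda$, not merely its norm.
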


\begin{proof}
By assumption, there exists $w_1\in W(\frk,\frt)$ such that
$$
\delta=\rho_n^{(j)}+w_1(w\Lambda-\rho_c).
$$
Now for any $0\leq j^\prime \leq s-1$, we have that
\begin{align*}
\delta-2\rho_n^{(j^\prime)}
&=w_1(w\Lambda-\rho_c)+(w^{(j)}\rho-\rho_c)-2(w^{(j^\prime)}\rho-\rho_c)\\
&=w_1(w\Lambda-\rho_c)+\rho_c+w^{(j)}\rho-2w^{(j^\prime)}\rho\\
&=w\Lambda-\big( (w\Lambda -\rho_c)-w_1(w\Lambda -\rho_c)\big)+w^{(j)}\rho-2w^{(j^\prime)}\rho.
\end{align*}
Therefore,
\begin{equation}\label{delta-pairing}
\langle \delta-2\rho_n^{(j^\prime)}, w^{(j^\prime)}\xi_i\rangle
=\langle  w\Lambda+w^{(j)}\rho,  w^{(j^\prime)}\xi_i\rangle - \langle  (w\Lambda -\rho_c)-w_1(w\Lambda -\rho_c),  w^{(j^\prime)}\xi_i\rangle -2\langle \rho, \xi_i \rangle.
\end{equation}
By Lemma 5.5 of \cite{DH11}, $\xi_i-w^{-1}w^{(j^\prime)}\xi_i$ is a non-negative linear combination of roots in $\Delta^+(\frg, \frh)$.
Thus
$$
\langle \Lambda, \xi_i-w^{-1}w^{(j^\prime)}\xi_i \rangle\geq 0.
$$
Therefore,
$$
\langle w\Lambda, w^{(j^\prime)}\xi_i \rangle
=\langle \Lambda, w^{-1}w^{(j^\prime)}\xi_i \rangle
=\langle \Lambda, \xi_i \rangle - \langle \Lambda, \xi_i-w^{-1}w^{(j^\prime)}\xi_i \rangle
\leq \langle \Lambda, \xi_i \rangle  \leq \langle \rho, \xi_i \rangle.
$$
Similarly, one deduces that
$$
\langle  w^{(j)}\rho,  w^{(j^\prime)}\xi_i\rangle \leq \langle \rho, \xi_i \rangle
$$
and that
$$
\langle (w\Lambda -\rho_c)-w_1(w\Lambda -\rho_c),  w^{(j^\prime)}\xi_i\rangle \geq 0.
$$
Substituting the above  inequalities into \eqref{delta-pairing} gives that
$\langle \delta-2\rho_n^{(j^\prime)}, w^{(j^\prime)} \rangle\leq 0$. Therefore, the $K$-type $V_{\delta}$ is u-small by Theorem 6.7(e).
\end{proof}

\section{Cohomolgical induction}\label{sec-coho-ind}
This section aims to briefly recall cohomological induction, which is an effective way of constructing unitary representations.
Firstly, let us fix an element $H\in \frt_{\bbR}$, and define the $\theta$-stable parabolic subalgebra
\begin{equation}\label{def-theta-stable-parabolic}
\frq=\frl\oplus \fru
\end{equation}
as the nonnegative eigenspaces of ${\rm ad}(H)$. The Levi subalgebra $\frl$ of $\frq$ is the zero eigenspace of ${\rm ad}(H)$, while the nilradical $\fru$ of $\frq$ is the sum of positive eigenspaces of ${\rm ad}(H)$. If we denote  by $\overline{\fru}$ the sum of negative eigenspaces of ${\rm ad}(H)$, then
$$
\frg=\overline{\fru} \oplus \frl \oplus \fru.
$$
Let $L=N_{G}(\frq)$. Then $L\cap K$ is a maximal compact subgroup of $L$.

We arrange the positive root system $\Delta^+(\frg, \frh)$ so that
$$
\Delta^+(\frl, \frh)=\Delta(\frl, \frh)\cap \Delta^+(\frg, \frh), \quad\Delta(\fru, \frh)\subseteq \Delta^+(\frg, \frh).
$$
Denote by $\rho^L$ (resp., $\rho_c^L$) the half sum of positive roots in $\Delta^+(\frl, \frh)$ (resp., $\Delta^+(\frl\cap\frk, \frt)$). Let $\rho_n^L=\rho^L-\rho_c^L$. Denote bu $\rho(\fru)$ (resp., $\rho(\fru\cap\frp)$, $\rho(\fru\cap\frk)$) the half sum of roots in $\Delta(\fru, \frt)$ (resp., $\Delta(\fru\cap \frp, \frt)$, $\Delta(\fru\cap\frk, \frt)$).
The following relations hold
\begin{equation}\label{half-sum-relations}
\rho=\rho^L+\rho(\fru), \quad \rho_c=\rho_c^L+\rho(\fru\cap\frk), \quad \rho_n=\rho_n^L+\rho(\fru\cap\frk).
\end{equation}

Now let $Z$ be an admissible $(\frl, L\cap K)$-module with \emph{real} infinitesimal character $\lambda_L$. That is, $\lambda_L\in\frh_{\bbR}^*$. Assume that $\lambda_L$ is dominant for $\Delta^+(\frl, \frh)$. For simplicity, assume that $Z$ is in the \emph{good range}. That is,
\begin{equation}\label{good-range}
\langle \lambda_L +\rho(\fru), \alpha\rangle >0, \quad \forall \alpha\in \Delta(\fru, \frh).
\end{equation}
The cohomological induction functors $\caL_j(\cdot)$ and $\caR^j(\cdot)$ lift $Z$ to $(\frg, K)$-modules, and the most interesting case happens at the middle degree $S:=\dim (\fru\cap\frk)$. We refer the reader to the book \cite{KV} for detailed descriptions. As a quick glimpse, let us state the following result.

\begin{thm}\emph{(\cite[Theorems 0.50 and 0.51]{KV})}
Suppose the admissible $(\frl, L\cap K)$-module $Z$ is in the good range. Then we have
\begin{itemize}
\item[(a)] $\caL_j(Z)=\caR^{j}(Z)=0$ for $j\neq S$;
\item[(b)] $\caL_S(Z)\cong \caR^S(Z)$ as $(\frg, K)$-modules;
\item[(c)] if $Z$ is  irreducible and unitary, then $\caL_S(Z)$ is an irreducible unitary $(\frg, K)$-module with infinitesimal character $\lambda_L+\rho(\fru)$.
\end{itemize}
\end{thm}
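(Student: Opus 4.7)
The plan is to follow the classical cohomological induction machinery of Knapp-Vogan, exploiting the standing assumption that $\lambda_L+\rho(\fru)$ is strictly dominant with respect to $\Delta(\fru,\frh)$. For part (a), I would first note that $\caL_j(Z)$ and $\caR^j(Z)$ are computed as the derived functors of the Zuckerman/Bernstein functors applied to $\pro_{\frq,L\cap K}^{\frg,L\cap K}(Z)$ and the corresponding ind construction. Their $K$-types can be analyzed via the generalized Blattner formula, giving an Euler-Poincar\'e expression for the alternating sum $\sum_j (-1)^j [\caL_j(Z)|_K]$. The strategy is a standard vanishing argument: in the good range, the infinitesimal character $\lambda_L+\rho(\fru)$ of any putative irreducible constituent is far enough from the walls that, combined with the Casselman-Osborne lemma, no $K$-type can contribute outside middle degree $S=\dim(\fru\cap\frk)$. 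Thus the cohomology collapses to degree $S$.

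For part (b), I would invoke Hard Duality (Enright-Wallach / Knapp-Vogan), which provides a natural isomorphism between $\caL_j$ and $\caR^{2S-j}$ after composing with the duality functor and tensoring with $\wedge^{\rm top}(\fru)$. Once (a) concentrates both functors in the single middle degree $j=S$, this duality forces $\caL_S(Z)$ and $\caR^S(Z)$ to be isomorphic as $(\frg,K)$-modules; in fact the two-sided Koszul resolution connecting them becomes a quasi-isomorphism precisely in the good range.

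For part (c), the infinitesimal character computation is the easiest step: any irreducible constituent of $\caL_S(Z)$ has infinitesimal character $\lambda_L+\rho(\fru)$ by Casselman-Osborne, applied to the observation that $Z\otimes\wedge^{\rm top}\fru$ has infinitesimal character $\lambda_L+2\rho(\fru)$ on $\frl$. Irreducibility then follows from the Vogan-Zuckerman irreducibility theorem in the good range, via the computation of the bottom layer $K$-types: each irreducible constituent of $Z|_{L\cap K}$ produces a unique bottom-layer $K$-type in $\caL_S(Z)$, and these generate the whole module. Unitarity is the deepest ingredient and the \emph{main obstacle}: it is established via Vogan's Signature Theorem, which constructs an invariant Hermitian form on $\caL_S(Z)$ from the one on $Z$ and shows (through an intricate induction on the length of $\fru$ and a positivity analysis on bottom-layer $K$-types) that good-range positivity of the form on $Z$ propagates to $\caL_S(Z)$.

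The routine pieces are the infinitesimal character identity and the Euler-Poincar\'e bookkeeping; the genuinely hard step is the propagation of unitarity, since it requires more than an abstract vanishing argument and is what singles out the good range as the natural hypothesis. I would therefore devote the bulk of the exposition to unpacking the Signature Theorem and merely cite the Hard Duality and vanishing arguments.
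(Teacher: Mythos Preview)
The paper does not supply a proof of this theorem at all; it is stated purely as a citation of Theorems~0.50 and~0.51 of Knapp--Vogan \cite{KV}, with no argument given. Your sketch is a faithful outline of the Knapp--Vogan proofs themselves (vanishing outside middle degree via the good-range dominance hypothesis, Hard Duality for part~(b), and the bottom-layer plus Signature Theorem package for irreducibility and unitarity in part~(c)), so it goes well beyond what the paper does and is essentially correct as a summary of the cited reference.
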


In the special case that the inducing module $Z$ is a one-dimensional unitary character $\bbC_{\lambda}$, we will call the corresponding $(\frg, K)$-module $\caL_S(Z)$ an $A_{\frq}(\lambda)$ module. After \cite{KV}, the module $\aq(\lambda)$ is called \emph{good} (relative to $\frq$ and $\frg$) if
\begin{equation}\label{Aqlambda-good}
\langle \lambda+\rho, \alpha \rangle > 0, \quad \forall \alpha\in \Delta(\fru, \frh),
\end{equation}
and \emph{weakly good} if
\begin{equation}\label{Aqlambda-weakly-good}
\langle \lambda+\rho, \alpha \rangle \geq 0, \quad \forall \alpha\in \Delta(\fru, \frh);
\end{equation}
The module $\aq(\lambda)$ is called \emph{fair} if
\begin{equation}\label{Aqlambda-fair}
\langle \lambda+\rho(\fru), \alpha \rangle > 0, \quad \forall \alpha\in \Delta(\fru, \frh),
\end{equation}
and \emph{weakly fair} if
\begin{equation}\label{Aqlambda-weakly-fair}
\langle \lambda+\rho(\fru), \alpha \rangle \geq 0, \quad \forall \alpha\in \Delta(\fru, \frh);
\end{equation}
The range that the inducing module $Z$ lives in is crucial for the properties of $\caL_S(Z)$. We will recall some other ranges later when necessary. However, it is now worth mentioning that as shown by Salamanca-Riba \cite{Sa}, any irreducible unitary $(\frg, K)$-module with a real, integral, and strongly regular infinitesimal character $\Lambda$ must be isomorphic to an $A_{\frq}(\lambda)$ module in the good range.  Here  $\Lambda$ being \emph{strongly regular} means that
\begin{equation}\label{strongly-regular}
\langle \Lambda-\rho, \alpha \rangle \geq 0, \quad \forall \alpha\in \Delta^+(\frg, \frh).
\end{equation}

\section{Dirac index}\label{sec-DI}
In this section, we further assume that $G$ is equal rank -- that is, $\frh=\frt$ and $\fra=0$,
so that one can talk about Dirac index. Note that for general $G$ that are not necessarily of equal rank, 
a recent work of Barbasch-Pand\v zi\' c-Trapa \cite{BPT} generalized this idea to \textit{twisted Dirac index}.

Fix a choice of $\Delta^+(\frg, \frt)$, and put
$$
\Delta^+(\frp, \frt)=\Delta^+(\frg, \frt) \cap \Delta(\frp, \frt), \quad \Delta^-(\frp, \frt)=-\Delta^+(\frp, \frt).
$$
We have the corresponding isotropic decomposition
$$
\frp=\frp^+ \oplus \frp^-,
$$
where $\frp^+=\sum_{\alpha\in \Delta^+(\frp, \frt)} \frg_{\alpha}$ and $\frp^-=\sum_{\alpha\in \Delta^-(\frp, \frt)} \frg_{\alpha}$.
Then
$$
{\rm Spin}_{G}\cong \bigwedge \frp^+ \otimes \bbC_{-\rho_n}.
$$
Any weight in ${\rm Spin}_{G}$ has the form $-\rho_n + \langle\Phi\rangle$, where $\Phi$ is a subset of $\Delta^+(\frp, \frt)$ and $\langle\Phi \rangle$ stands for the sum of the roots in $\Phi$.
Now put
\begin{equation}\label{spin-module-even-odd}
{\rm Spin}_{G}^+ = \bigwedge^{\rm even} \frp^+ \otimes \bbC_{-\rho_n}, \quad {\rm Spin}_{G}^- = \bigwedge^{\rm odd} \frp^+ \otimes \bbC_{-\rho_n}.
\end{equation}
Let $X$ be any $(\frg, K)$-module, the Dirac operator $D$ interchanges $X\otimes {\rm Spin}^+_{G}$ and $X\otimes {\rm Spin}^-_{G}$. Thus the Dirac cohomology $H_D(X)$ breaks up into the even part and the odd part, which will be denoted by $H_D^+(X)$ and $H_D^-(X)$ respectively. The Dirac index is defined as
\begin{equation}\label{Dirac-index}
{\rm DI}(X):=H_D^+(X)-H_D^-(X),
\end{equation}
which is a virtual $\widetilde{K}$-module.
By Remark 3.8 of \cite{MPVZ}, if another positive root system $(\Delta^+)^\prime(\frg, \frt)$ is chosen,
$$
{\rm DI}^\prime(X)=(-1)^{\# ((\Delta^+)^\prime(\frp, \frt) \setminus \Delta^+(\frp, \frt)) }{\rm DI}(X).
$$
Therefore, the Dirac index is well-defined up to a sign. Moreover, by Proposition 3.12 of \cite{MPV},
$$
{\rm DI}(X)=X\otimes {\rm Spin}^+_{G}-X\otimes {\rm Spin}^-_{G}.
$$

It is obvious that if ${\rm DI}(X)$ is non-zero, then $H_D(X)$ must be non-zero. However, the converse is not true. Indeed, for any irreducible $(\frg, K)$-module $X$, Conjecture 10.3 of \cite{H15} asserts that there should be no cancellation between $H_D^+(X)$ and $H_D^-(X)$. A counter-example to this conjecture (revisited below in Example \ref{eg-splitf4}) has been reported in \cite{DDY} on split $F_4$, where the Dirac cohomology is non-zero, but cancellation happens and the Dirac index vanishes.

The following result suggests that for most cases, we can still expect that there is no cancellation when passing from Dirac cohomology to Dirac index.

\begin{prop}
Let $Z$ be an irreducible unitary $(\frl, L\cap K)$-module with infinitesimal character $\lambda_L\in \frt_{\bbR}^*$ which is weakly good. That is, $\langle \lambda_L + \rho(\fru), \alpha \rangle\geq 0$ for all $\alpha\in \Delta(\fru, \frt)$. Assume that there is no cancellation between $H_D^+(Z)$ and $H_D^-(Z)$. Then there is no cancellation between $H_D^+(\caL_S(Z))$ and $H_D^-(\caL_S(Z))$ as well.
\end{prop}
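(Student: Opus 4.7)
The plan is to reduce the statement to a known formula expressing the Dirac cohomology of $\caL_S(Z)$ in terms of that of $Z$, and to track the $\mathbb{Z}/2$-grading carefully through this reduction. Choose a positive root system $\Delta^+(\frg,\frt)$ compatible with $\frq$, so that $\Delta(\fru,\frt)\subset\Delta^+(\frg,\frt)$. Then $\frp^+=(\frp^+\cap\frl)\oplus(\fru\cap\frp)$ and, by \eqref{half-sum-relations}, $\rho_n=\rho_n^L+\rho(\fru\cap\frp)$. This yields an isomorphism of $\mathbb{Z}/2$-graded $\widetilde{L\cap K}$-modules
$$\Spin_G \cong \Spin_L \otimes \bigwedge\nolimits(\fru\cap\frp)\otimes \bbC_{-\rho(\fru\cap\frp)},$$
in which the grading on the right is the tensor product of the parity grading on $\Spin_L$ with the exterior-degree parity on $\bigwedge(\fru\cap\frp)$, the character twist being of even parity.

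Next I would invoke the Huang--Pand\v zi\'c-type formula (cf.\ Chapter~9 of \cite{HP2}) identifying the Dirac cohomology of $\caL_S(Z)$ via a natural bijection
$$F_\mu \longleftrightarrow F_{\mu+\rho(\fru\cap\frp)-\rho(\fru\cap\frk)}$$
between the irreducible $\widetilde{L\cap K}$-type constituents of $H_D(Z)$ and the irreducible $\widetilde K$-type constituents of $H_D(\caL_S(Z))$, which preserves multiplicities. This bijection is obviously injective because the shift is a fixed weight, and the target is automatically $K$-dominant by the bottom-layer argument. Combining with Step~1, the $\widetilde K$-type $F_{\mu+\rho(\fru\cap\frp)-\rho(\fru\cap\frk)}$ arises by tensoring a contribution carrying $F_\mu$ with the top exterior power $\bigwedge^{\dim(\fru\cap\frp)}(\fru\cap\frp)$, so its parity in $\Spin_G$ equals that of $F_\mu$ in $\Spin_L$ shifted by the uniform sign $(-1)^{\dim(\fru\cap\frp)}$. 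Equivalently, in the Grothendieck group of virtual $\widetilde K$-modules,
$$\DI(\caL_S(Z)) = (-1)^{\dim(\fru\cap\frp)}\,\Ind_{\widetilde{L\cap K}}^{\widetilde K}\!\bigl(\DI(Z)\otimes\bbC_{\rho(\fru\cap\frp)-\rho(\fru\cap\frk)}\bigr).$$

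To finish, the hypothesis of no cancellation between $H_D^+(Z)$ and $H_D^-(Z)$ means that each irreducible $\widetilde{L\cap K}$-constituent appearing in $H_D(Z)$ sits entirely in one of $H_D^{\pm}(Z)$. By the injective, parity-shifting bijection, the same conclusion transfers to the $\widetilde K$-constituents of $H_D(\caL_S(Z))$: distinct $F_\mu$'s yield distinct $F_{\mu+\rho(\fru\cap\frp)-\rho(\fru\cap\frk)}$'s, and each lands in a single parity, so no cancellation occurs in $\DI(\caL_S(Z))$. The main obstacle I anticipate is establishing the bijection and its parity-tracking in the weakly good range, rather than just the strictly good range where the underlying spectral sequence (or Hochschild--Kostant--Rosenberg-type decomposition of $\caL_S(Z)\otimes\Spin_G$) degenerates cleanly; extending to weakly good requires either a translation-principle argument that deforms $\lambda_L$ to a generic good parameter without disturbing the $\mathbb{Z}/2$-graded Dirac cohomology, or a direct verification that the relevant resolutions remain exact at the boundary.
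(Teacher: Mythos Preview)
Your strategy---reduce to the known description of $H_D(\caL_S(Z))$ in terms of $H_D(Z)$ and track the $\bbZ/2$-grading---is exactly the paper's strategy (the paper cites \cite[Lemma~4.3]{DH15} for this description). However, several details in your execution are off.

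First, the shift is $\rho(\fru\cap\frp)$, not $\rho(\fru\cap\frp)-\rho(\fru\cap\frk)$: if $\gamma_L=\mu_L-\rho_n^L+\langle\Phi\rangle$ is the highest weight of an $\widetilde{L\cap K}$-component of $H_D(Z)$ (with $\mu_L$ an $L\cap K$-type and $\Phi\subset\Delta^+(\frl\cap\frp,\frt)$), then the corresponding $\widetilde K$-type has highest weight $\gamma_G=(\mu_L+2\rho(\fru\cap\frp))-\rho_n+\langle\Phi\rangle=\gamma_L+\rho(\fru\cap\frp)$.

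Second, the passage is not a bijection: the target weight $\mu_L+2\rho(\fru\cap\frp)$ need not be $\Delta^+(\frk,\frt)$-dominant, and in that case the component is simply killed rather than lifted. Your ``automatically $K$-dominant by the bottom-layer argument'' overstates what holds in the weakly good range. Fortunately this does not harm the conclusion---killing components cannot create cancellation---but you should state the map as a partial, injective lift rather than a bijection.

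Third, and most importantly for the parity claim: the contribution from $\bigwedge(\fru\cap\frp)$ is the \emph{degree-zero} piece, not the top exterior power. The same subset $\Phi\subset\Delta^+(\frl\cap\frp,\frt)\subset\Delta^+(\frp,\frt)$ appears on both the $L$ and $G$ levels, so the sign $(-1)^{\#\Phi}$ is \emph{preserved}, not shifted by $(-1)^{\dim(\fru\cap\frp)}$. Your conclusion survives because a uniform sign shift would not create cancellation either, but the mechanism you describe is not what actually happens.

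With these three corrections your argument coincides with the paper's proof; no translation-to-generic-parameter detour is needed, since \cite{DH15} already covers the weakly good range directly.
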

\begin{proof}
Note firstly that $L$ is equal rank as well. We have that
$$
{\rm Spin}_{L}\cong \wedge^\bullet (\frl \cap \frp^+)\otimes \bbC_{-\rho_n^{L}}, \quad \rho_n=\rho_n^{L}+\rho(\fru\cap\frp).
$$
Now let us go through the proof of \cite[Lemma 4.3]{DH15}. Without loss of generality, we assume that $H_D(Z)$ and $H_D(\caL_S(Z))$ are both non-zero.

On the $L$ level, take the highest weight of an arbitrary component in $H_D(Z)$
\begin{equation}\label{gamma-L}
\gamma_L:=\mu_L -\rho_n^{L}+\langle\Phi\rangle,
\end{equation}
where $\mu_L$ is the highest weight of a $L\cap K$-type in $Z$, and $\Phi$ is a subset of  $\Delta^+(\frl\cap\frp, \frt)$ (see Remark 4.4 of \cite{DH15}). Therefore, the component $\gamma_L$ has sign $(-1)^{\#\Phi}$ in ${\rm DI} (Z)$.

If $\mu_L+2\rho(\fru\cap\frp)$ is dominant for $\Delta^+(\frk, \frt_f)$, then on the $G$ level, we have the  following counterpart of $\gamma_L$:
\begin{equation}\label{gamma-G}
\gamma_G:= \big(\mu_L +2\rho(\fru\cap\frp)\big)-\rho_n+\langle\Phi\rangle,
\end{equation}
where $\Phi$ is the \emph{same} subset of $\Delta^+(\frl\cap\frp, \frt)$  as in \eqref{gamma-L}. In particular, the component $\gamma_G$ has sign $(-1)^{\#\Phi}$ in ${\rm DI}(\caL_S(Z))$ as well.
Note that $\gamma_G=\gamma_L+\rho(\fru\cap\frp)$ in view of \eqref{half-sum-relations}. On the other hand, if $\mu_L+2\rho(\fru\cap\frp)$ is not dominant for $\Delta^+(\frk, \frt)$, then $\gamma_L$ has no counterpart on the $G$ level.

To sum up, after moving from $L$ to $G$, any component in $H_D^+(Z)$ and $H_D^-(Z)$ is either killed, or just lifted by $\rho(\fru\cap\frp)$ with the parity being preserved.
Thus the desired result follows.
\end{proof}

\subsection{Translation Principle of Dirac Index}
The main goal of this manuscript is to understand the Dirac index of $\aq(\lambda)$-modules beyond admissible range.
To do so, one can first compute the Dirac index of $\aq(\lambda)$-modules by \cite[Theorem 5.2]{HKP}, and then apply the translation principle of Dirac index given in \cite[Theorem 4.7]{MPV}. Note that the translation principle works well only for Dirac index, but not for
Dirac cohomology.

\smallskip

The Dirac index for $\aq(\lambda)$-modules in the admissible range is given as follows:
\begin{prop} \label{prop-index}
Let $\lambda\in \frt^*$ be admissible. That is, $\langle \lambda, \alpha \rangle\geq 0$ for all $\alpha\in \Delta(\fru, \frt)$. Then
\begin{equation}
{\rm DI}(A_{\frq}(\lambda))=(-1)^{\#\Delta^+(\frl\cap\frp, \frt)}\sum_{w\in W(\frl, \frt)^1} {\rm det}(w) E_{w\rho-\rho_c}.
\end{equation}
\end{prop}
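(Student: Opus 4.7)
The plan is to compute the Dirac index on the Levi side first and then lift to $G$ by tracing through the argument of the preceding Proposition. Since $A_\frq(\lambda) = \caL_S(\bbC_\lambda)$ with $\bbC_\lambda$ one-dimensional and $\lambda$ fixed by $W(\frl, \frt)$ (as $\lambda$ annihilates $[\frl, \frl]$), the $L$-side Dirac index reduces to a Weyl denominator computation.

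On the $L$-side, Lemma~\ref{lemma-spin-G} applied to $L$ gives $\mathrm{Spin}_L \cong \wedge^\bullet(\frl \cap \frp^+) \otimes \bbC_{-\rho_n^L}$, whence
\begin{equation*}
\mathrm{Spin}_L^+ - \mathrm{Spin}_L^- = (-1)^{\#\Delta^+(\frl \cap \frp, \frt)} \prod_{\alpha \in \Delta^+(\frl \cap \frp, \frt)} \bigl(e^{\alpha/2} - e^{-\alpha/2}\bigr)
\end{equation*}
as a virtual character of $L \cap K$. Multiplying by $e^\lambda$ and applying the Weyl denominator identity for the pair $(\frl, \frl \cap \frk)$ together with the Weyl character formula on $L \cap K$ (using $W(\frl, \frt)$-invariance of $\lambda$) yields
\begin{equation*}
\mathrm{DI}_L(\bbC_\lambda) = (-1)^{\#\Delta^+(\frl \cap \frp, \frt)} \sum_{w \in W(\frl, \frt)^1} \det(w)\, E^{L \cap K}_{w(\lambda + \rho^L) - \rho_c^L}.
\end{equation*}

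Lifting to $G$: by the proof of the preceding Proposition, every $\tilde{L \cap K}$-type $E^{L \cap K}_{\mu_L}$ contributing to $\mathrm{DI}_L(\bbC_\lambda)$ lifts, whenever $\mu_L + 2\rho(\fru \cap \frp)$ is $\Delta^+(\frk, \frt)$-dominant, to the $\tilde K$-type $E_{\mu_L + \rho(\fru \cap \frp)}$ in $\mathrm{DI}(\caL_S(\bbC_\lambda))$ with the sign preserved. Using $\rho = \rho^L + \rho(\fru)$, $\rho_c = \rho_c^L + \rho(\fru \cap \frk)$, and $W(\frl, \frt)$-invariance of $\rho(\fru)$, the shifted highest weight simplifies to
\begin{equation*}
w(\lambda + \rho^L) - \rho_c^L + \rho(\fru \cap \frp) = w(\lambda + \rho) - \rho_c,
\end{equation*}
giving the stated formula.

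The main obstacle will be verifying the dominance condition $\mu_L + 2\rho(\fru \cap \frp) \in C_{\frk}(\frt_{\bbR}^*)$ for every $w \in W(\frl, \frt)^1$, so that no contribution is killed when passing from $L$ to $G$, together with checking that distinct $w$ produce distinct $\tilde K$-types on the $G$-side to rule out cancellations in $\mathrm{DI}$. Both rely on the admissibility of $\lambda$, which supplies the necessary regularity of $\lambda + \rho$ along the roots of $\fru$.
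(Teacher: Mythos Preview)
Your approach is valid but takes a different route from the paper's. The paper simply invokes \cite[Theorem~5.2]{HKP}, which already computes $H_D(A_{\frq}(\lambda))$ in the admissible range and, for each $w\in W(\frl,\frt)^1$, identifies the subset $\Phi\subset\Delta^+(\frl\cap\frp,\frt)$ producing the corresponding $\widetilde{K}$-type; the sign in the index is then $(-1)^{\#\Phi}$, and since $\Phi=\Delta^+(\frl\cap\frp,\frt)\setminus\Phi_w$ with $\Phi_w=w\Delta^-(\frg,\frt)\cap\Delta^+(\frg,\frt)$, one reads off $(-1)^{\#\Phi}=(-1)^{\#\Delta^+(\frl\cap\frp,\frt)}\det(w)$ in one line. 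Your route---compute ${\rm DI}_L(\bbC_\lambda)$ via the Weyl denominator and lift---is more structural, and the global sign emerges naturally rather than from bookkeeping on $\Phi$; but note that the lifting step leans on \cite[Lemma~4.3]{DH15}, which is essentially the inductive mechanism behind the HKP computation, so the two arguments share the same core.

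One correction to the obstacle you flag: in the proof of the preceding Proposition the dominance condition $\mu_L+2\rho(\fru\cap\frp)\in C_{\frk}(\frt_{\bbR}^*)$ is imposed on the $L\cap K$-types $\mu_L$ occurring in $Z$, not on the $\widetilde{L\cap K}$-types in $H_D(Z)$. Since $Z=\bbC_\lambda$ has the single $L\cap K$-type $\mu_L=\lambda$, there is only \emph{one} condition to check---that $\lambda+2\rho(\fru\cap\frp)$ is $\Delta^+(\frk,\frt)$-dominant---and this is exactly what admissibility gives (it is the lowest $K$-type of $A_{\frq}(\lambda)$). So the obstacle does not depend on $w$ and dissolves at once; distinctness of the $E_{w(\lambda+\rho)-\rho_c}$ then follows from the regularity of $\lambda+\rho$, which admissibility also supplies.
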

\begin{proof}
This result is actually implicit in the proof of \cite[Theorem 5.2]{HKP}.
Adopting the notations there, $E_{\lambda + 2\rho(\fru\cap\frp)}$ is the lowest $K$-type of $A_{\frq}(\lambda)$; moreover, for each $w\in W(\frl, \frt)^1$,  the weight $\lambda+ 2\rho(\fru\cap\frp)+\nu$, where $\nu:=-\rho_n+\langle\Phi\rangle$,
shows up as the highest weight of the $\widetilde{K}$-type $E_{w\rho-\rho_c}$ of $H_D(A_{\frq}(\lambda))$. The sign of $E_{w\rho-\rho_c}$ in the Dirac index of $A_{\frq}(\lambda)$ is $(-1)^{\#\Phi}$. As shown on page 169 of \cite{HKP},
$$
\Phi=\Delta^+(\frl\cap\frp, \frt)\setminus \Phi_w,
$$
where $\Phi_w=w(\Delta^-(\frg, \frt))\cap \Delta^+(\frg, \frt)$. Thus $(-1)^{\#\Phi}=(-1)^{\# \Delta^+(\frl\cap\frp, \frt)}(-1)^{\#\Phi_w}$. Now the result follows since $(-1)^{\# \Phi_w}=\det(w)$.
\end{proof}

To go beyond the admissible range, let
\begin{equation}\label{E-tilde-mu}
\widetilde{E}_{\mu} = \begin{cases}
0 & \text{if}\ \mu\ \text{is}\ \Delta(\mathfrak{k},\mathfrak{t})\text{-singular} \\
\det(w) E_{w\mu - \rho_c} & \text{if } \exists w \in W(\mathfrak{k},\mathfrak{t})\ \text{s.t. } w\mu\ \text{is dominant regular for } \Delta^+(\mathfrak{k},\mathfrak{t})
\end{cases},
\end{equation}
so that the Dirac index of $\aq(0)$ in Proposition \ref{prop-index} can be re-written as
\begin{equation} \label{Aq0DI}
{\rm DI}(A_{\frq}(0))= \sum_{w\in W(\frl, \frt)^1} \det(w) \widetilde{E}_{w\rho}
\end{equation}
up to a sign. Then we have:
\begin{thm}\label{thm-DI-Aqlambda}
The Dirac index of weakly fair $\aq(\lambda)$ is equal to
$$
{\rm DI}(A_{\frq}(\lambda)) = \sum_{w \in W(\mathfrak{l},\mathfrak{t})^1} {\rm det}(w) \widetilde{E}_{w(\lambda + \rho)}.$$
\end{thm}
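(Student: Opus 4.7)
The strategy, as indicated in the paragraph preceding \eqref{E-tilde-mu}, is to propagate the admissible-range formula \eqref{Aq0DI} to all weakly fair $\lambda$ by means of the translation principle for the Dirac index established in \cite[Theorem 4.7]{MPV}. The base identity
$$
{\rm DI}(A_{\frq}(0)) = \sum_{w\in W(\frl, \frt)^1} \det(w)\, \widetilde{E}_{w\rho}
$$
is just Proposition \ref{prop-index} at $\lambda=0$ rewritten via \eqref{E-tilde-mu}, up to the overall sign fixed by $\Delta^+(\frp,\frt)$.

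The first step is to realise the weakly fair $\aq(\lambda)$ as a translate of $\aq(0)$. Since $\lambda$ is the differential of a one-dimensional character of $L$, it is $W(\frl,\frt)$-invariant as a $\frt$-weight, and after coherent continuation one may form the translation functor $\psi_\rho^{\lambda+\rho}$ together with an auxiliary finite-dimensional $\frg$-module $F_\lambda$ of extremal weight $\lambda$. The compatibility of cohomological induction with tensoring by finite-dimensional representations (see \cite[Ch.~VII]{KV}) yields
$$
\psi_{\rho}^{\lambda+\rho}\big(A_{\frq}(0)\big) \;\cong\; A_{\frq}(\lambda),
$$
and the weakly fair hypothesis \eqref{Aqlambda-weakly-fair} is precisely what keeps this identification valid up to the boundary of the good range.

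The second step is to apply \cite[Theorem 4.7]{MPV} term by term. That result asserts that translation commutes with the Dirac index, so the effect of $\psi_\rho^{\lambda+\rho}$ on a single summand $\widetilde{E}_{w\rho}$ of ${\rm DI}(\aq(0))$ is to return the projection of the corresponding $\widetilde{K}$-type tensored with $F_\lambda|_{\widetilde{K}}$ onto the $(\lambda+\rho)$-infinitesimal-character block. A direct highest-weight calculation identifies this projection precisely with $\widetilde{E}_{w\rho+\lambda}$ as in \eqref{E-tilde-mu}: it vanishes when $w\rho+\lambda$ is $\Delta(\frk,\frt)$-singular, and otherwise contributes the unique dominant regular $\widetilde{K}$-type signed by $\det(w')$. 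Since $w\in W(\frl,\frt)^1\subseteq W(\frl,\frt)$ fixes $\lambda$, we have $w\rho+\lambda=w(\lambda+\rho)$, and summing over $W(\frl,\frt)^1$ yields the stated formula.

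The main obstacle is the boundary behaviour in the weakly fair but not fair range: on walls where $\langle \lambda+\rho(\fru),\alpha\rangle=0$ for some $\alpha\in\Delta(\fru,\frh)$, the module $\aq(\lambda)$ can become reducible or even zero, and individual Dirac cohomology summands may collide or vanish. The saving grace is that the Dirac \emph{index} is an Euler-characteristic, hence insensitive to such composition-factor cancellations, while the $\widetilde{E}$-formalism of \eqref{E-tilde-mu} is exactly engineered to record the signed collapses predicted by \cite[Theorem 4.7]{MPV}. The central technical point to verify is that no extra cancellation between distinct summands indexed by $w,w'\in W(\frl,\frt)^1$ occurs beyond what the $\widetilde{E}$-notation already encodes.
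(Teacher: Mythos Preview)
Your overall strategy matches the paper's: reduce to the known formula \eqref{Aq0DI} for $\aq(0)$ and invoke the translation principle of \cite[Theorem~4.7]{MPV}. However, the crucial link you leave vague is precisely the one the paper makes explicit.

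You assert $\psi_{\rho}^{\lambda+\rho}(\aq(0)) \cong \aq(\lambda)$ and say the weakly fair hypothesis is ``precisely what keeps this identification valid.'' That is where the content lies, and your justification (compatibility of cohomological induction with tensoring, \cite[Ch.~VII]{KV}) does not by itself yield this on the module level once you leave the good range. What that compatibility does give is that the \emph{Euler characteristic} $\sum_{i}(-1)^{i}\caR_{\frq}^{S-i}(\bbC_\lambda)$ lies in the same coherent family of virtual $(\frg,K)$-modules as $\aq(0)$. The paper then invokes \cite[Theorem~0.53]{KV}: in the weakly fair range all degrees $j\neq S$ vanish, so $\aq(\lambda)=\caR_{\frq}^{S}(\bbC_\lambda)$ coincides with this alternating sum as a virtual module, and hence sits in the coherent family. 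That vanishing theorem is the step you should supply in place of the unsupported module-level isomorphism.

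Once that is in place, \cite[Theorem~4.7]{MPV} gives the formula directly: translating each summand $\widetilde{E}_{w\rho}$ yields $\widetilde{E}_{w\rho+\lambda}=\widetilde{E}_{w(\lambda+\rho)}$ by definition \eqref{E-tilde-mu}. Your closing paragraph about verifying that ``no extra cancellation between distinct summands indexed by $w,w'$ occurs'' is unnecessary: the $\widetilde{E}$-notation already records whatever collapses happen, and the theorem asserts nothing beyond the resulting formal sum. There is no residual technical point to check.
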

\begin{proof}
If $\aq(\lambda)$ is in the weakly fair range, then by Theorem 0.53 of \cite{KV},
$$
\aq(\lambda) = \mathcal{R}_{\mathfrak{q}}^S(\mathbb{C}_{\lambda}) = \sum_{i = 0}^S (-1)^{i} \mathcal{R}_{\mathfrak{q}}^{S-i}(\mathbb{C}_{\lambda}).
$$
Thus $\aq(\lambda)$ is in the same coherent family of virtual $(\mathfrak{g},K)$-modules as $\aq(0)$ (c.f. \cite[Definition 3.1]{MPVZ}). The result follows from Theorem 4.7 of \cite{MPV}.
\end{proof}

\begin{example} \label{eg-splitf4}
Consider the equal rank connected group $\texttt{F4\_s}$ in $\texttt{atlas}$, and let us revisit Example 6.3 of \cite{DDY} via the  translation method.  Fix  $\Delta^+(\frg, \frt)$ from page 716 of \cite{Kn} so that it has the following simple roots:
$$
\alpha_1=\frac{1}{2}(e_1-e_2-e_3-e_4), \quad \alpha_2=e_4, \quad \alpha_3=e_3-e_4, \quad \alpha_4=e_2-e_3.
$$
Fix $\Delta^+(\frk, \frt)$ so that it has the following simple roots:
$$
\gamma_1=\alpha_1, \quad\gamma_2=\alpha_2, \quad \gamma_3=\alpha_3, \quad
\gamma_4=2 \alpha_1   + 4 \alpha_2  + 3 \alpha_3     + 2 \alpha_4=e_1+e_2.
$$
Then
$$
\rho_c=(2, -1, \frac{3}{2}, \frac{1}{2}).
$$

Let us denote the simple reflections $s_{\alpha_i}$ by $s_i$ for $1\leq i\leq 4$.
The set $W(\frg, \frt)^1$ can be enumerated  in the way of \eqref{Wgtone-label} as follows:
\begin{align*}
W(\frg, \frt)^1=\{e, s_4, s_4s_3, s_4s_3s_2, s_4s_3s_2s_1, s_4s_3s_2s_3, s_4s_3s_2s_1s_3, s_4s_3s_2s_3s_4, \\ s_4s_3s_2s_1s_3s_2, s_4s_3s_2s_1s_3s_4, s_4s_3s_2s_1s_3s_2s_3, s_4s_3s_2s_1s_3s_2s_4\}.
\end{align*}
Correspondingly, we have that
\begin{align*}
\rho^{(0)}=(\frac{11}{2}, \frac{5}{2}, \frac{3}{2}, \frac{1}{2}), \quad \rho^{(1)}=(\frac{11}{2}, \frac{3}{2}, \frac{5}{2}, \frac{1}{2}), \quad \rho^{(2)}=(\frac{11}{2}, \frac{1}{2}, \frac{5}{2}, \frac{3}{2}), \quad
\rho^{(3)}=(\frac{11}{2}, -\frac{1}{2}, \frac{5}{2}, \frac{3}{2}),\\
\rho^{(4)}=(5, -1, 3, 2), \quad
\rho^{(5)}=(\frac{11}{2}, -\frac{3}{2}, \frac{5}{2}, \frac{1}{2}), \quad
\rho^{(6)}=(5, -2, 3, 1), \quad
\rho^{(7)}=(\frac{11}{2}, -\frac{5}{2}, \frac{3}{2}, \frac{1}{2}),\\
\rho^{(8)}=(\frac{9}{2}, -\frac{5}{2}, \frac{7}{2}, \frac{1}{2}), \quad
\rho^{(9)}=(5, -3, 2, 1), \quad
\rho^{(10)}=(\frac{7}{2}, -\frac{5}{2}, \frac{9}{2}, \frac{1}{2}), \quad
\rho^{(11)}=(\frac{9}{2}, -\frac{7}{2}, \frac{5}{2}, \frac{1}{2}).
\end{align*}

The element $H = (1,0,1,0) \in i\frt_0$ defines a $\theta$-stable parabolic subalgebra
$$
\mathfrak{q} = \mathfrak{l} + \mathfrak{u}
$$
via  the way of \eqref{def-theta-stable-parabolic}, where $[\frl, \frl]=\frs\frp(6, \bbR)$. Using \eqref{Aq0DI}, one computes that
$$
{\rm DI}(\aq(0))=
-\widetilde{E}_{\rho^{(1)}}+\widetilde{E}_{\rho^{(2)}}-\widetilde{E}_{\rho^{(3)}}
+\widetilde{E}_{\rho^{(4)}}+\widetilde{E}_{\rho^{(5)}}-\widetilde{E}_{\rho^{(6)}}
+\widetilde{E}_{\rho^{(8)}}-\widetilde{E}_{\rho^{(10)}}.
$$
Now let us consider the module $\aq(\lambda)$ with
$\lambda = (-3,0,-3,0)$. This module is in the fair range.
Theorem \ref{thm-DI-Aqlambda} says that
$$
{\rm DI}(\aq(\lambda))=
-\widetilde{E}_{\rho^{(1)}+\lambda}+\widetilde{E}_{\rho^{(2)}+\lambda}
-\widetilde{E}_{\rho^{(3)}+\lambda}+\widetilde{E}_{\rho^{(4)}+\lambda}+
\widetilde{E}_{\rho^{(5)}+\lambda}-\widetilde{E}_{\rho^{(6)}+\lambda}
+\widetilde{E}_{\rho^{(8)}+\lambda}-\widetilde{E}_{\rho^{(10)}+\lambda}.
$$
One can check that the $\rho^{(j)} + \lambda$ above is $\Delta^+(\frk, \frt)$-singular except for $j = 3$ and $10$. Hence
$$
\mathrm{DI}(\aq(\lambda)) = -\widetilde{E}_{\rho^{(3)} + \lambda} - \widetilde{E}_{\rho^{(10)} + \lambda}
= -\widetilde{E}_{(\frac{5}{2}, -\frac{1}{2}, -\frac{1}{2}, \frac{3}{2})} - \widetilde{E}_{(\frac{1}{2}, -\frac{5}{2}, \frac{3}{2}, \frac{1}{2})}.
$$
Note that
$$
s_{\gamma_2}s_{\gamma_3}
(\frac{5}{2}, -\frac{1}{2}, -\frac{1}{2}, \frac{3}{2})=s_{\gamma_4}(\frac{1}{2}, -\frac{5}{2}, \frac{3}{2}, \frac{1}{2})=(\frac{5}{2}, -\frac{1}{2}, \frac{3}{2}, \frac{1}{2}),
$$
which is dominant regular for $\Delta^+(\frk, \frt)$. Therefore, by \eqref{E-tilde-mu},
$$
\mathrm{DI}(\aq(\lambda)) = -E_{(\frac{5}{2}, -\frac{1}{2}, \frac{3}{2}, \frac{1}{2})-\rho_c} +E_{(\frac{5}{2}, -\frac{1}{2}, \frac{3}{2}, \frac{1}{2})-\rho_c}=0.
$$

Finally, we note that $\aq(0)$ is the module in the $15$-th row of \cite[Table 10]{DDY} with $a=1$, while $\aq(\lambda)$ is the module in the $13$-th row of \cite[Table 6]{DDY}.
\hfill\qed
\end{example}

\section{Dirac index of weakly fair $A_{\frq}(\lambda)$ modules of $U(p, q)$}
From now on, we focus on $G = U(p,q)$.
Consider the $\theta$-stable parabolic subalgebra $\mathfrak{q} = \mathfrak{l} \oplus \mathfrak{u}$ of $\frg$ defined in the way of \eqref{def-theta-stable-parabolic} by the following element
\begin{equation} \label{thetalevi}
(\overbrace{a_1, \dots, a_1}^{p_1}, \overbrace{a_2, \dots, a_2}^{p_2}, \dots, \overbrace{a_k \dots a_k}^{p_k} | \overbrace{a_1 \dots a_1}^{q_1}, \overbrace{a_2, \dots, a_2}^{q_2}, \dots, \overbrace{a_k \dots a_k}^{q_k}),
\end{equation}
where $p_i, q_j \geq 0$ for all $1 \leq i, j \leq k$ and $a_1 > a_2 > \dots > a_k$. Here $\sum_{i=1}^k p_i=p$, and $\sum_{i=1}^k q_i=q$. Then
$$
\mathfrak{l} = u(p_1,q_1) \oplus u(p_2,q_2) \oplus \dots \oplus u(p_k,q_k).
$$

\subsection{A Characterization of $\aq(\lambda)$-modules}
\begin{definition} \label{def-chains}
Let $\mathfrak{q} = \mathfrak{l} \oplus \mathfrak{u}$ be $\theta$-stable Levi subalgebra of $\mathfrak{g}$ given by Equation \eqref{thetalevi}. Then a {\bf signed chain} is given by:
$$\mathcal{C}_i(A_i) := (A_i, A_i-1,\dots, a_i + 1, a_i)^{p_i,q_i}, \quad i = 1,\dots, k,$$
where $a_i = A_i - (p_i + q_i - 1)$. An {\bf $\aq$-chain} is defined as the \emph{ordered}
union of signed chains $\bigcup_{i=1}^{k} \mathcal{C}_i(A_i)$.
Moreover, we say an $\aq$-chain is
\begin{itemize}
\item[(a)] {\bf good} if $a_i > A_{i+1}$ for all $i$.
\item[(b)] {\bf fair} if $(A_i + a_i)/2 > (A_{i+1} + a_{i+1})/2$ for all $i$.
\end{itemize}
If the above strict inequalities are replaced by $\geq$, we call the
corresponding $\aq$-chain {\bf weakly good} and {\bf weakly fair} respectively.
\end{definition}
Let $\lambda$ be such that $\mathbb{C}_{\lambda}$
is a unitary $(\mathfrak{l}, L \cap K)$-module. We assign each $\aq(\lambda)$-module to an $\aq$-chain. Firstly, when $\lambda = 0$,
$\aq(0)$ is assigned to the following $\aq$-chain:
\begin{equation}\label{chain-Aq0}
\aq(0) \longleftrightarrow \bigcup_{i=1}^k \mathcal{C}_i(Z_i)
\end{equation}
with $Z_i := \sum_{j = i}^{k} p_j + q_j$.
In general, for each $A_{\frq}(\lambda)$ with
$$\lambda := (\overbrace{\lambda_1, \dots, \lambda_1}^{p_1}, \overbrace{\lambda_2, \dots, \lambda_2}^{p_2}, \dots, \overbrace{\lambda_k \dots \lambda_k}^{p_k} | \overbrace{\lambda_1 \dots \lambda_1}^{q_1}, \overbrace{\lambda_2, \dots, \lambda_2}^{q_2}, \dots, \overbrace{\lambda_k \dots \lambda_k}^{q_k}), \quad \lambda_i \in \mathbb{Z}$$
we assign the $\aq$-chain
\begin{equation}\label{chain-Aqlambda}
\aq(\lambda) \longleftrightarrow \bigcup_{i=1}^k \mathcal{C}_i(Z_i + \lambda_i).
\end{equation}
Similar to the characterizations of \cite[p.22]{T1}, one can check that an $\aq(\lambda)$-module is in the (weakly) good/(weakly) fair range if and only if its corresponding $\aq$-chain is in the (weakly) good/(weakly) fair range in the sense of
Definition \ref{def-chains} respectively. Combining Theorem 3.1b(iv), Definition 3.4 and Lemma 3.5 of \cite{T1}, we know that a weakly fair $\aq(\lambda)$ of $U(p, q)$ is either
irreducible unitary or zero.

Let
$$\tau := (\frac{p+q+1}{2}, \dots, \frac{p+q+1}{2}).$$
Then the infinitesimal character of $\aq(\lambda) \longleftrightarrow \bigcup_{i=1}^k \mathcal{C}_i(A_i)$ is given by the
$W(\frg, \frt)$-conjugate of
$$(A_1, \dots, a_1; \dots; A_k, \dots, a_k) - \tau.$$
For instance, the infinitesimal character of $\aq(0)$ is equal to
$$(Z_1,\dots,Z_1 - (p_1 + q_1 -1); \cdots; Z_k,\dots,Z_k - (p_k + q_k -1))- \tau = (p+q , p+q-1, \dots, 2,1) - \tau = \rho.$$
Using this characterization of the infinitesimal character of $\aq(\lambda)$ modules, the results
of \cite{HP} can be rephrased as follows.
\begin{lemma}\label{lemma-Aqlambda-nonzeroDirac}
Suppose $A_{\frq}(\lambda)$ corresponds to the $\aq$-chain $\bigcup_{i=1}^k \mathcal{C}_i(A_i)$.
Assume that $H_D(\aq(\lambda))$ $\neq$ $0$. Then
\begin{equation} \label{hpcondition}
\begin{aligned}
&\bullet\ \text{The entries}\ \bigcup_{i=1}^k \{A_i,\dots, a_i\}\ \text{cannot appear more than twice; and}\\
&\bullet\ \text{There are at most}\ \min\{p, q\}\ \text{distinct entries appearing twice in}\ \bigcup_{i=1}^k \{A_i,\dots, a_i\}.
\end{aligned}
\end{equation}
\end{lemma}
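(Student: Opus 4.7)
The plan is to translate the hypothesis $H_D(\aq(\lambda)) \neq 0$ via Theorem \ref{thm-HP} into a combinatorial constraint on the multiset of entries of the $\aq$-chain of $\aq(\lambda)$. Since $G = U(p,q)$ is equal rank, $\frh = \frt$ and the Weyl group $W(\frg, \frh) = S_{p+q}$ acts on $\frt^*$ by coordinate permutation. As recalled just before the lemma, the infinitesimal character of $\aq(\lambda) \longleftrightarrow \bigcup_{i=1}^k \caC_i(A_i)$ is represented by the vector
\[
\Lambda_0 := (A_1, A_1-1, \dots, a_1;\ \dots;\ A_k, A_k-1, \dots, a_k) - \tau.
\]
Theorem \ref{thm-HP} applied to the hypothesis $H_D(\aq(\lambda)) \neq 0$ therefore produces a $\widetilde{K}$-type $E_{\gamma} \subseteq H_D(\aq(\lambda))$ such that $\Lambda_0$ and $\gamma + \rho_c$ are permutations of each other; in particular, they share the same multiset of coordinates.

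Next, I would analyze $\gamma + \rho_c$. Since $\gamma$ is a dominant weight for $\frk = \fru(p) \oplus \fru(q)$, its first $p$ and last $q$ coordinates are each weakly decreasing. Adding the strictly block-wise decreasing $\rho_c = \bigl(\tfrac{p-1}{2}, \tfrac{p-3}{2}, \dots, -\tfrac{p-1}{2}\ \big|\ \tfrac{q-1}{2}, \dots, -\tfrac{q-1}{2}\bigr)$ forces both the first $p$ and the last $q$ coordinates of $\gamma + \rho_c$ to be pairwise distinct. Because $\tau$ is a constant vector, the distinctness property transfers, yielding a partition $M = M_p \sqcup M_q$ of the multiset $M$ of entries of $\bigcup_{i=1}^k \{A_i, \dots, a_i\}$, where $|M_p| = p$, $|M_q| = q$, and each of $M_p$ and $M_q$ consists of pairwise distinct values.

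The two claims of the lemma then follow by a quick counting argument. Any value in $M$ occurs at most once in $M_p$ and at most once in $M_q$, hence at most twice in $M$; this is the first bullet. A value occurring twice in $M$ must contribute exactly one element to each of $M_p$ and $M_q$, so the number $m$ of doubly-occurring distinct values satisfies $m \leq |M_p| = p$ and $m \leq |M_q| = q$, whence $m \leq \min(p,q)$; this is the second bullet.

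The proof is a direct combinatorial unpacking and I do not anticipate genuine obstacles. The only point that requires attention is that Theorem \ref{thm-HP} merely supplies equality of multisets of entries (i.e., equality as a $W(\frg,\frt)$-orbit); it is the strict block-wise regularity of $\rho_c$ for $U(p,q)$ — not any $W$-orbit information — that provides the crucial two-set partition $M_p \sqcup M_q$ from which both bullets immediately drop out.
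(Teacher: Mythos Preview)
Your proposal is correct and follows essentially the same approach as the paper: both invoke Theorem~\ref{thm-HP} to realize the infinitesimal character as a $\Delta^+(\frk,\frt)$-regular weight $\gamma+\rho_c$, and then use the block-wise distinctness of the first $p$ and last $q$ coordinates to obtain the two counting constraints. Your explicit unpacking of why $\gamma+\rho_c$ has strictly decreasing blocks (via the form of $\rho_c$) is a slightly more detailed version of the paper's one-line appeal to $\Delta^+(\frk,\frt)$-regularity, but the argument is otherwise identical.
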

\begin{proof}
Let $\Lambda := (A_1, \dots, a_1; \dots; A_k, \dots, a_k) - \tau$ be the infinitesimal character of $\aq(\lambda)$. If $H_D(\aq(\lambda)) \neq 0$, then by Theorem \ref{thm-HP} there must be an element
$w \in W(\mathfrak{g},\mathfrak{t})$ such that $w\Lambda$ is a $\Delta^+(\mathfrak{k},\mathfrak{t})$-regular weight
of the form:
$$w\Lambda = (\omega_1, \dots, \omega_p |\ \omega_{p+1},\dots, \omega_{p+q}).$$
Suppose on the contrary that there are $r\geq 3$ repeated entries in
$\bigcup_{i=1}^k \{A_i,\dots, a_i\}$, then {\it all} Weyl conjugates $w\Lambda$
must have repeated entries in either $\{\omega_1, \dots, \omega_p\}$
or $\{\omega_{p+1},\dots,\omega_{p+q}\}$. Hence $w\Lambda$ cannot be $\Delta^+(\mathfrak{k},\mathfrak{t})$-regular.

Similarly, if there are more than $\min\{p,q\}$ entries of $\bigcup_{i=1}^k \{A_i,\dots, a_i\}$ repeating twice, then
at least one of them will appear twice in either $\{\omega_1, \dots, \omega_p\}$
or $\{\omega_{p+1},\dots,\omega_{p+q}\}$, contradicting the regularity of $w\Lambda$.
\end{proof}

\begin{example}\label{exam-su24}
Let $G = U(2,4)$ and consider the $\theta$-stable parabolic subalgebra $\frq$ of $\frg$ defined by the element $(1, 1 |2, 1, 1, 0)$. Then $\frl=\fru(0,1) + \fru(2,2) + \fru(0, 1)$. We assign the chain
$$(6)^{0,1} \cup (5,4,3,2)^{2,2} \cup (1)^{0,1}$$
to the module $A_{\frq}(0)$, and assign the chain
$$(4)^{0,1} \cup (5,4,3,2)^{2,2} \cup (3)^{0,1}$$
to the module $A_{\frq}(\lambda)$ with $\lambda=(0,0|-2, 0, 0, 2)$. Note that $A_{\frq}(\lambda)$ is fair but not weakly good.
The infinitesimal character of $\aq(\lambda)$ is $(\frac{3}{2},\frac{1}{2},\frac{1}{2},-\frac{1}{2},-\frac{1}{2},-\frac{3}{2})$.\hfill\qed
\end{example}

\subsection{Dirac index for weakly fair $\aq(\lambda)$ modules}
We now rephrase Theorem \ref{thm-DI-Aqlambda} using our notion of chains.

A $(p,q)$-\emph{shuffle} of the numbers $a_1 > a_2 > \cdots >a_{p+q}$ is a permutation $i_1, \dots, i_p$, $j_1, \dots, j_q$  of the indices $1, 2, \dots, p+q$ such that
$$
a_{i_1}> a_{i_2}>\cdots>a_{i_p}, \quad a_{j_1}>a_{j_2}>\cdots>a_{j_q}.
$$
The total number of $(p,q)$-shuffles is ${p+q\choose p}$.
\begin{cor} \label{cor-diupq}
For all weakly fair $\aq(\lambda)$ corresponding to $\aq$-chain $\bigcup_{i=1}^k \mathcal{C}(A_i)$ in \eqref{chain-Aqlambda},
its Dirac index is given by:
$$
\mathrm{DI}(\aq(\lambda)) = \sum_{w \in W(\mathfrak{l},\mathfrak{t})^1} {\rm det}(w) \widetilde{E}_{
(\underbrace{\alpha_1, \dots, \beta_1}_{p_1};\ \dots;\ \underbrace{\alpha_k, \dots, \beta_k}_{p_k} |
\underbrace{\gamma_1, \dots, \delta_1}_{q_1};\ \dots;\ \underbrace{\gamma_k, \dots, \delta_k}_{q_k}) - \tau}$$
where the sum above is over all
are the $(p_i, q_i)$-shuffles of the $p_i + q_i$ coordinates $(\alpha_i, \dots, \beta_i; \gamma_i, \dots, \delta_i)$ of $(A_i, \dots, a_i)$ given by $w \in W(\mathfrak{l},\mathfrak{t})^1$.
\end{cor}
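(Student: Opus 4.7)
The plan is to directly unpack Theorem \ref{thm-DI-Aqlambda}, which gives
$$\mathrm{DI}(\aq(\lambda)) = \sum_{w \in W(\frl,\frt)^1} \det(w)\, \widetilde{E}_{w(\lambda+\rho)},$$
by means of two dictionary entries: first, a description of the infinitesimal character $\lambda + \rho$ in chain coordinates; second, a description of $W(\frl,\frt)^1$ together with its action on $\lambda + \rho$ in terms of products of $(p_i,q_i)$-shuffles.

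For the first entry, I would work with the positive system $\Delta^+(\frg,\frt)$ compatible with $\frq$, so that $\Delta(\fru,\frt) \subseteq \Delta^+(\frg,\frt)$ and, on each Levi factor $u(p_i,q_i)$, it restricts to the standard positive system in which the $p_i$-indices precede the $q_i$-indices. In the resulting \emph{chain order} of the $p+q$ coordinates of $\frt^*$---block $1$'s $p_1$ indices, then its $q_1$ indices, then block $2$'s $p_2$ indices, and so on---the vector $\rho$ is simply $(\tfrac{p+q-1}{2}, \tfrac{p+q-3}{2}, \dots, -\tfrac{p+q-1}{2})$. Since $\lambda$ is the constant $\lambda_i$ on block $i$, a short substitution using $A_i = \sum_{j\geq i}(p_j+q_j) + \lambda_i$ and $a_i = A_i - (p_i+q_i-1)$ yields
$$\lambda + \rho = (A_1, A_1-1, \dots, a_1;\ \dots;\ A_k, \dots, a_k) - \tau$$
in chain order, consistent with the infinitesimal-character description already recalled before Lemma \ref{lemma-Aqlambda-nonzeroDirac}.

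For the second entry, I would factor $W(\frl,\frt) = \prod_i S_{p_i+q_i}$ and $W(\frl\cap\frk,\frt) = \prod_i (S_{p_i}\times S_{q_i})$; then $W(\frl,\frt)^1$, being the set of elements sending the $\frl$-dominant chamber into the $(\frl\cap\frk)$-dominant chamber, is precisely the product over $i$ of the sets of $(p_i,q_i)$-shuffles. Applied to $\lambda + \rho$, such a $w$ acts independently on each block: within block $i$ it singles out an ordered length-$p_i$ subsequence $(\alpha_i, \dots, \beta_i)$ of the chain $\mathcal{C}_i(A_i)$ to land in the $p$-positions of block $i$, with the complementary length-$q_i$ subsequence $(\gamma_i, \dots, \delta_i)$ landing in the $q$-positions. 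Reassembling $w(\lambda+\rho)$ in the standard $(p\mid q)$ coordinate order of $U(p,q)$ produces exactly the weight displayed in the statement of the corollary, and substitution into Theorem \ref{thm-DI-Aqlambda} then gives the desired formula.

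The main obstacle is purely organizational: tracking the interplay between the two orderings of coordinates of $\frt^*$---the chain order, in which $\lambda + \rho$ has its clean form, and the $(p\mid q)$ order, in which $\Delta^+(\frk,\frt)$ is dominant and the final answer is displayed. Once a single $(p_i,q_i)$-shuffle is understood as a prescription for splitting the entries of $\mathcal{C}_i(A_i)$ between the $p$- and $q$-parts of block $i$, the formula falls out of Theorem \ref{thm-DI-Aqlambda} without further computation.
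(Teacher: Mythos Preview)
Your proposal is correct and follows exactly the approach implicit in the paper: the corollary is stated there without proof, introduced only as a rephrasing of Theorem \ref{thm-DI-Aqlambda} in the language of $\aq$-chains, and your two ``dictionary entries''---identifying $\lambda+\rho$ with the concatenated chains shifted by $\tau$, and identifying $W(\frl,\frt)^1$ with products of $(p_i,q_i)$-shuffles---are precisely the translations needed to make that rephrasing explicit.
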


\begin{example}
Let $G$ be $U(2,4)$, and consider the $A_{\frq}(\lambda)$ module
corresponding to $(4)^{0,1} \cup (5,4,3,2)^{2,2} \cup (3)^{0,1}$
in Example \ref{exam-su24}. Then $\lambda = (0,0;-2,0,0,2)$, and its Dirac index is equal to
\begin{align*}
{\rm DI}(\aq(\lambda)) =\ & \widetilde{E}_{(54|6;32;1) +\lambda - \tau} - \widetilde{E}_{(53|6;42;1)+\lambda - \tau} + \widetilde{E}_{(52|6;43;1)+\lambda - \tau} \\
& +  \widetilde{E}_{(43|6;52;1)+\lambda - \tau} - \widetilde{E}_{(42|6;53;1)+\lambda - \tau} + \widetilde{E}_{(32|6;54;1)+\lambda - \tau}\\
=\ & \widetilde{E}_{(54|4;32;3)- \tau} - \widetilde{E}_{(53|4;42;3)- \tau} + \widetilde{E}_{(52|4;43;3)- \tau} \\
 & + \widetilde{E}_{(43|4;52;3)- \tau} - \widetilde{E}_{(42|4;53;3)- \tau} + \widetilde{E}_{(32|4;54;3)- \tau}.
\end{align*}
Note that only the fourth term has non-singular weight for $\Delta(\frk, \frt)$. Thus we have
$${\rm DI}(\aq(\lambda)) = \widetilde{E}_{(43|4;52;3)- \tau} = E_{(43|5432)- \tau - \rho_c} =  E_{(00|0000)}$$
up to a sign, where the second equality holds since the permutation $(4523) \mapsto (5432)$ is even. \hfill\qed
\end{example}

We now compute the Dirac index for all weakly fair $\aq(\lambda)$, i.e., the corresponding
$\aq$-chains $\bigcup_{i=1}^k \mathcal{C}_i(A_i)$ must be weakly fair in the sense of Definition \ref{def-chains}.
First of all, if the $\aq$-chain does not satisfy \eqref{hpcondition}, then by Lemma \ref{lemma-Aqlambda-nonzeroDirac}
it must have zero Dirac cohomology and Dirac index. Therefore, we assume our weakly fair $\aq$-chain
satisfies \eqref{hpcondition} from now on.

Let $\mathcal{R}$ be the coordinates appearing more than once in $\bigcup_i \{A_i,\dots,a_i\}$.
By Lemma \ref{lemma-Aqlambda-nonzeroDirac}, $\#\mathcal{R} \leq \min\{p, q\}$, and each $r \in \mathcal{R}$ can only appear in
two chains. For all $i < j$, let
\begin{equation} \label{rij}
\mathcal{R}_{ij} := \{A_i,\dots,a_i\} \cap \{A_j,\dots,a_j\}.
\end{equation}
Then the disjoint union of the non-empty $\mathcal{R}_{ij}$'s gives $\mathcal{R}$.

\begin{lemma}\label{lemma-30}
Using the above notations, consider the inequalities for
non-negative integers $(a_{ij}, b_{ij})$, $1 \leq i < j \leq k$:
\begin{equation} \label{eq-inequality}
\begin{cases}
a_{ij} + b_{ij} = \#\mathcal{R}_{ij}, \\
a_{ij} \leq \min\{p_i,q_j\},\\
b_{ij} \leq \min\{p_j,q_i\}, \\
\left(\sum_{j > i} a_{ij} + \sum_{l < i} b_{li}\right) \leq p_i, \\
\left(\sum_{l < i} a_{li} + \sum_{j > i} b_{ij}\right) \leq q_i.
\end{cases}
\end{equation}
Suppose the above inequalities have no solutions, then $\mathrm{DI}(\aq(\lambda)) = 0$.
\end{lemma}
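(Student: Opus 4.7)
The plan is to unwind Corollary \ref{cor-diupq} and analyze exactly when each $\widetilde{E}$-term in the signed sum is forced to vanish by singularity. By that corollary, $\mathrm{DI}(\aq(\lambda))$ is a signed sum of terms $\widetilde{E}_\mu$, indexed by $w \in W(\mathfrak{l},\mathfrak{t})^1$, where $\mu$ encodes a choice of $(p_i, q_i)$-shuffle for each chain $\mathcal{C}_i(A_i)$ independently. Since $\widetilde{E}_\mu = 0$ whenever $\mu$ is $\Delta(\mathfrak{k},\mathfrak{t})$-singular by \eqref{E-tilde-mu}, it suffices to show that \emph{no} shuffle-choice produces a non-singular weight whenever \eqref{eq-inequality} is infeasible.

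Next I would describe $\Delta(\mathfrak{k},\mathfrak{t})$-singularity concretely: since $\mathfrak{k} = \mathfrak{u}(p) \oplus \mathfrak{u}(q)$ and $\tau$ is constant across all coordinates, $\mu - \tau$ is $\Delta(\mathfrak{k},\mathfrak{t})$-regular if and only if the $p$ coordinates before the bar are pairwise distinct and the $q$ coordinates after the bar are pairwise distinct. Within any single chain the entries $A_i > A_i - 1 > \cdots > a_i$ are strictly decreasing, so any shuffle places distinct entries on each side; the only source of coincidences is entries shared between different chains. Under \eqref{hpcondition}, such shared entries form $\mathcal{R} = \bigsqcup_{i<j} \mathcal{R}_{ij}$, with each repeat appearing exactly twice, once in chain $i$ and once in chain $j$. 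Hence $\mu$ is non-singular iff, for every $r \in \mathcal{R}_{ij}$, its two copies are placed on \emph{opposite} sides of the bar.

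Given any non-singular shuffle-choice, I would define $a_{ij}$ to be the number of elements of $\mathcal{R}_{ij}$ sent to the $p$-part of chain $i$ (and hence to the $q$-part of chain $j$), and set $b_{ij} := \#\mathcal{R}_{ij} - a_{ij}$. The capacity of each block forces $a_{ij} \leq p_i$ and $a_{ij} \leq q_j$, giving $a_{ij} \leq \min\{p_i, q_j\}$; symmetrically $b_{ij} \leq \min\{p_j, q_i\}$. Summing the total number of repeated entries flowing into the $p$-part of chain $i$ yields $\sum_{j>i} a_{ij} + \sum_{l<i} b_{li} \leq p_i$, and symmetrically $\sum_{l<i} a_{li} + \sum_{j>i} b_{ij} \leq q_i$. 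Thus every non-singular shuffle-choice produces a solution of \eqref{eq-inequality}. Contrapositively, if \eqref{eq-inequality} is infeasible, every $\widetilde{E}$-term in Corollary \ref{cor-diupq} is singular and therefore zero, so $\mathrm{DI}(\aq(\lambda)) = 0$.

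The combinatorics is essentially elementary; the main point to verify carefully is that \eqref{hpcondition} guarantees each repeated entry lies in exactly one $\mathcal{R}_{ij}$, ruling out triple coincidences, so the disjoint decomposition $\mathcal{R} = \bigsqcup_{i<j} \mathcal{R}_{ij}$ really is a partition and the counting in the previous paragraph is unambiguous. Since the lemma asserts only the implication ``no solution $\Rightarrow$ $\mathrm{DI} = 0$,'' no delicate cancellation argument among different shuffle-terms is required here; the converse direction (which would require tracking signs to rule out cancellations) is the harder question and is deferred to the subsequent theorem.
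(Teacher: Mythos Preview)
Your argument is correct and follows essentially the same route as the paper's proof: both observe, via Corollary~\ref{cor-diupq} (equivalently Theorem~\ref{thm-DI-Aqlambda}), that a nonzero term $\widetilde{E}_{w(\lambda+\rho)}$ requires the repeated entries in $\mathcal{R}$ to be split across the two sides of the bar, and then extract the integers $(a_{ij},b_{ij})$ from any such non-singular shuffle to produce a solution of \eqref{eq-inequality}. Your write-up is in fact a cleaner and more explicit version of the paper's somewhat terse argument; the only cosmetic point is that the individual bounds $a_{ij}\leq\min\{p_i,q_j\}$ and $b_{ij}\leq\min\{p_j,q_i\}$ already follow from the aggregated capacity constraints in the last two lines of \eqref{eq-inequality}, so your separate justification of them is redundant but harmless.
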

\begin{proof}
By the translation principle, in order for $\mathrm{DI}(\aq(\lambda)) \neq 0$,
there must be some $w \in W(\mathfrak{l},\mathfrak{t})^1$ such that
$\widetilde{E}_{w\rho} \longrightarrow \widetilde{E}_{w(\rho + \lambda)}$ yields a
non-singular weight in $\Delta^+(\frk, \frt)$. Therefore, the entries in
$\mathcal{R}$ must appear both in the first $p$ coordinates and the
last $q$ coordinates of $w(\rho + \lambda)$.

Indeed, each solution set $(a_{ij}, b_{ij})$ in \eqref{eq-inequality}
determines the positions of the entries of $\mathcal{R}$ in $w(\rho + \lambda)$.
More precisely, $a_{ij}$ gives the possibilities of the cardinality of a subset
$\mathcal{S}_{ij} \subset \mathcal{R}_{ij}$, so that the entries in $\mathcal{S}_{ij}$ appear
between the $(p_1 + \dots + p_{i-1} + 1)^{st}$ and $(p_1 + \dots + p_{i})^{th}$ coordinates
as well as the $(p + q_1 + \dots + q_{j-1} + 1)^{st}$ and $(p + q_1 + \dots + q_{j})^{th}$
coordinates of $w(\rho + \lambda)$, while $b_{ij}$ is the cardinality of
$\mathcal{R}_{ij} \backslash \mathcal{S}_{ij}$, whose entries appear between the
$(p_1 + \dots + p_{j-1} + 1)^{st}$ and $(p_1 + \dots + p_{j})^{th}$ coordinates
as well as the $(p + q_1 + \dots + q_{i-1} + 1)^{st}$ and $(p + q_1 + \dots + q_{i})^{th}$
coordinates of $w(\rho + \lambda)$ (See Algorithm \ref{indexalg} below for more details).
Consequently, if \eqref{eq-inequality} have no solutions, then for all $w \in W(\mathfrak{l},\mathfrak{t})^1$,
there exists $r_w \in \mathcal{R}$ such that it appears in
the first $p$ coordinates or the last $q$ coordinates of $w(\rho + \lambda)$ twice,
and hence the Dirac index is zero.
\end{proof}

\begin{example}
Let $G=U(2,2)$, and we consider the $6$ discrete series representations $A_{\mathfrak{b}_i}(0)$ for $\mathfrak{b}_i = \mathfrak{h}_i + \mathfrak{n}_i$
corresponding to the $\theta$-stable Borel subalgebras defined by the elements $\rho_1 = (4,3|2,1)$, $\rho_2 = (4,2|3,1)$, $\rho_3 =(4,1|3,2)$, $\rho_4 = (3,2|4,1)$, $\rho_5 = (3,1|4,2)$ and $\rho_6 = (2,1|4,3)$ in the fundamental Cartan subalgebra.
In terms of $\aq$-chains, they are of the form
$$A_{\mathfrak{b}_i}(0) = (4)^{\sigma_{i,4}} \cup (3)^{\sigma_{i,3}} \cup (2)^{\sigma_{i,2}} \cup (1)^{\sigma_{i,1}},$$
where
$$\sigma_{i,j} = \begin{cases}
(1,0) & \text{if}\ j\ \text{appears in the first 2 coordinates of}\ \rho_i\\
(0,1) & \text{if}\ j\ \text{appears in the last 2 coordinates of}\ \rho_i
\end{cases},$$
and $H_D(A_{\mathfrak{b}_i}(0)) = \widetilde{E}_{\rho_i - \tau}$.

Suppose we apply translation
$$A_{\mathfrak{b}_i}(0) = (4)^{\sigma_{i,4}} \cup (3)^{\sigma_{i,3}} \cup (2)^{\sigma_{i,2}} \cup (1)^{\sigma_{i,1}} \longrightarrow
A_{\mathfrak{b}_i}(\lambda) = (3)^{\sigma_{i,4}} \cup (3)^{\sigma_{i,3}} \cup (2)^{\sigma_{i,2}} \cup (2)^{\sigma_{i,1}},$$
then the infinitesimal character of $A_{\mathfrak{b}_i}(\lambda)$ satisfies \eqref{hpcondition}, yet $\mathrm{DI}(A_{\mathfrak{b}_i}(\lambda)) = E_{(3,2|3,2) - \rho_c - \tau}$ is nonzero only for $i = 2,3,4,5$.

Under the perspective of the above Lemma, $\mathcal{R} = \mathcal{R}_{12} \cup \mathcal{R}_{34} = \{3\} \cup \{2\}$. Equation \eqref{eq-inequality} is solvable for the $\aq$-chains corresponding to $A_{\mathfrak{b}_2}(\lambda)$ (with $(a_{12},b_{12},a_{34},b_{34}) = (1,0,1,0)$), $A_{\mathfrak{b}_3}(\lambda)$ (with $(a_{12},b_{12},a_{34},b_{34}) = (1,0,0,1)$), and $A_{\mathfrak{b}_4}(\lambda)$ (with $(a_{12},b_{12},a_{34},b_{34}) = (0,1,1,0)$) and $A_{\mathfrak{b}_5}(\lambda)$ (with $(a_{12},b_{12},a_{34},b_{34}) = (0,1,0,1)$) only.

To see why $A_{\mathfrak{b}_1}(\lambda)$ and $A_{\mathfrak{b}_6}(\lambda)$ yield zero Dirac index, one can
use \cite{T1} to compute their associated varieties and annihilators.
In our example, the tableau corresponding to the annihilator of both modules is equal to
$\begin{tabular}{|c|c|}
\hline
3 & 2\tabularnewline
\hline
3 & 2\tabularnewline
\hline
\end{tabular}\ $,
which has repeated entries on the columns. Therefore, it is equivalent to the zero tablueau, and hence both
$A_{\mathfrak{b}_1}(\lambda)$ and $A_{\mathfrak{b}_6}(\lambda)$ are zero modules by \cite[Theorem 7.9]{T1}. \hfill\qed
\end{example}

\begin{remark}
We expect that the above observation holds in general. That is, if a weakly fair $\aq(\lambda)$ module whose infinitesimal character
satisfies \eqref{hpcondition} but \eqref{eq-inequality} has no solutions, then it must be the zero module.
\end{remark}

By Lemma \ref{lemma-30}, we can focus on the weakly fair $\aq(\lambda)$ such that
\eqref{eq-inequality} has a solution. In such cases, we list
all possible $\widetilde{K}$-types appearing in ${\rm DI}(\aq(\lambda))$:

\begin{alg} \label{indexalg}
Each term appearing in the Dirac index of $\aq(\lambda)$ is obtained by filling up the coordinates $\alpha_i$, $\beta_i$ of the form
$$(\overbrace{\alpha_1, \dots, \alpha_1}^{p_1}; \overbrace{\alpha_2, \dots, \alpha_2}^{p_2}; \dots; \overbrace{\alpha_k, \dots ,\alpha_k}^{p_k} | \overbrace{\beta_1 \dots \beta_1}^{q_1}; \overbrace{\beta_2, \dots, \beta_2}^{q_2}; \dots; \overbrace{\beta_k, \dots, \beta_k}^{q_k})$$
by the following steps:
\begin{enumerate}
\item For each solution $\{(a_{ij}, b_{ij})|\ 1\leq i < j \leq k\}$ given in \eqref{eq-inequality}, fix a subset $\mathcal{S}_{ij} \subseteq \mathcal{R}_{ij}$ with $\#\mathcal{S}_{ij}= a_{ij}$ (and hence $\#(\mathcal{R}_{ij} \backslash \mathcal{S}_{ij}) = b_{ij}$).
\item Fill in $a_{ij}$ of the positions marked with $\alpha_i$ and $\beta_j$ by $\mathcal{S}_{ij}$, and $b_{ij}$ of the positions marked with $\alpha_j$ and $\beta_i$ by $\mathcal{R}_{ij} \backslash \mathcal{S}_{ij}$.
\item For the remaining unfilled $\alpha_i$ and $\beta_i$ positions, fill them up with the coordinates $\{A_i, \dots, a_i\} \backslash \mathcal{R}$.
\item For the positions marked by $\alpha_i, \dots, \alpha_i$ ($1 \leq i \leq p$) and $\beta_j, \dots, \beta_j$ ($1 \leq j \leq q$), rearrange its entries in descending order and get $\nu$, so that $\widetilde{E}_{\nu}$ is obtained from translating an $\widetilde{E}_{w\rho}$
appearing in ${\rm DI}(\aq(0))$.
\item Reorder the first $p$ and last $q$ coordinates of $\nu$ in descending order to get a dominant regular $\Delta(\mathfrak{k},\mathfrak{t})$-weight $\mu$. Then the $\widetilde{K}$-type $\widetilde{E}_{\mu} = E_{\mu - \rho_c}$ shows up in ${\rm DI}(\aq(\lambda))$.
\end{enumerate}
\end{alg}

\begin{example}
Consider the $A_{\frq}(\lambda)$ module of $U(5,6)$ corresponding to the following $\aq$-chain via \eqref{chain-Aqlambda}:
$$(7,6,5,4,3)^{3,2} \cup (5,4,3,2,1)^{2,3} \cup (1)^{0,1}.$$
This $A_{\frq}(\lambda)$ module is fair but not weakly good. Let us arrange the $\alpha_i$ and $\beta_j$ as follows:
$$
(\alpha_1,\alpha_1,\alpha_1; \alpha_2, \alpha_2| \beta_1, \beta_1; \beta_2, \beta_2, \beta_2; \beta_3).
$$
One computes that $\mathcal{R}_{12} = \{5,4,3\}$, $\mathcal{R}_{23} = \{1\}$.

Now study the possibilities of $(a_{12},b_{12})$, $(a_{23},b_{23})$:
$$\begin{cases}
a_{12} + b_{12} = 3, \\ a_{12} \leq 3, \\ b_{12} \leq 2,
\end{cases}\quad \begin{cases} a_{23} + b_{23} = 1, \\ a_{23} \leq 1, \\ b_{23} \leq 0, \end{cases}\quad \begin{cases} b_{12} + a_{23} \leq 2, \\ a_{12} + b_{23} \leq 3. \end{cases}
$$
The middle system of equations forces $(a_{23},b_{23}) = (1,0)$, and hence the right system of equations gives
$\begin{cases} b_{12} \leq 1, \\ a_{12} \leq 3. \end{cases}$ Thus the possibilities of $(a_{12},b_{12})$ are $(3,0)$ and $(2,1)$ only.

Do the case for $(a_{12},b_{12}) = (3,0)$, $(a_{23},b_{23}) = (1,0)$ first.
So $\mathcal{S}_{12} = \{5,4,3\}$ and $\mathcal{S}_{23} = \{1\}$ is the only possibility, i.e.
$$(5,4,3; \alpha_2, 1| \beta_1, \beta_1; 5,4,3; 1)$$
Now will fill the unrepeated entries $\{7,6\}$ to the unfilled $\alpha_1, \beta_1$ positions, and
$\{2\}$ to the unfilled $\alpha_2, \beta_2$ positions. So the only possibility is
$$(5,4,3; 2, 1| 7,6; 5,4,3; 1)$$

Now do the case $(a_{12},b_{12}) = (2,1)$, $(a_{23},b_{23}) = (1,0)$.
We still have $\mathcal{S}_{23} = \{1\}$ as the only possibility,
but now $\mathcal{S}_{12}$ can be $\{5,4\}$, $\{5,3\}$ and $\{4,3\}$.

Consider the case of $\mathcal{S}_{12} = \{5,4\}$, $\mathcal{S}_{23} = \{1\}$,
i.e.
$$(5,4,\alpha_1; 3, 1| 3, \beta_1; 5,4,\beta_2; 1)$$
The unrepeated entries are $\{7,6\}$ for $\alpha_1,\beta_1$, $\{2\}$ for $\alpha_2,\beta_2$. So we have two possibilities:
$$
(7,5,4; 3, 1| 6,3; 5,4,2; 1), \quad (6,5,4; 3, 1| 7,3; 5,4,2; 1).
$$
Similarly, the two possibilities for $\mathcal{S}_{12} = \{4,3\}$, $\mathcal{S}_{23} = \{1\}$ are
$$
(7,4,3; 5, 1| 6,5; 4,3,2; 1), \quad (6,4,3; 5, 1| 7,5; 4,3,2; 1).
$$
And for $\mathcal{S}_{12} = \{5,3\}$, $\mathcal{S}_{23} = \{1\}$, we have
$$
(7,5,3; 4, 1| 6,4; 5,3,2; 1), \quad (6,5,3; 4, 1| 7,4; 5,3,2; 1).
$$
To sum up,  the terms $E_{(54321|765431)-\tau-\rho_c}$, $E_{(75431|654321)-\tau-\rho_c}$
and $E_{(65431|754321)-\tau-\rho_c}$ all may appear in  ${\rm DI}(\aq(\lambda))$.

To check their multiplicities in the Dirac index, note that our $\aq(\lambda)$ comes from translation
$$
(11,10,9,8,7)^{3,2} \cup (6,5,4,3,2)^{2,3} \cup (1)^{0,1} \longrightarrow (7,6,5,4,3)^{3,2} \cup (5,4,3,2,1)^{2,3} \cup (1)^{0,1}.
$$
That is,
$$
\lambda=(-4, -4, -4, -1, -1 |-4, -4, -1, -1, -1, 0).
$$
So by Theorem \ref{thm-DI-Aqlambda}, the Dirac index of $\aq(\lambda)$ is obtained by
\begin{equation} \label{eq-samesign}
\begin{aligned}
+\widetilde{E}_{(9, 8, 7; 3, 2| 11, 10; 6, 5, 4; 1) - \tau} &\longrightarrow +\widetilde{E}_{(5,4,3; 2,1| 7,6; 5,4,3; 1) - \tau}= + E_{(5, 4, 3, 2, 1| 7, 6, 5, 4, 3, 1) - \tau-\rho_c}\\
-\widetilde{E}_{(11,9,8; 4, 2| 10,7; 6,5,3; 1) - \tau}&\longrightarrow -\widetilde{E}_{(7,5,4; 3, 1| 6,3; 5,4,2; 1) - \tau} = -E_{(7,5,4,3, 1| 6,5,4,3,2,1) - \tau-\rho_c}\\
+\widetilde{E}_{(10,9,8; 4, 2| 11,7; 6,5,3; 1) - \tau}&\longrightarrow +\widetilde{E}_{(6,5,4; 3, 1| 7,3; 5,4,2; 1) - \tau} = +E_{(6,5,4,3, 1| 7,5,4,3,2, 1) - \tau-\rho_c}\\
-\widetilde{E}_{(11,8,7; 6, 2| 10,9; 5,4,3; 1) - \tau}&\longrightarrow -\widetilde{E}_{(7,4,3; 5, 1| 6,5; 4,3,2; 1) - \tau} = -E_{(7,5,4,3, 1| 6,5,4,3,2,1) - \tau-\rho_c}\\
+\widetilde{E}_{(10,8,7; 6, 2| 11,9; 5,4,3; 1) - \tau}&\longrightarrow +\widetilde{E}_{(6,4,3; 5, 1| 7,5; 4,3,2; 1) - \tau} = +E_{(6,5,4,3, 1| 7,5,4,3,2, 1) - \tau-\rho_c}\\
-\widetilde{E}_{(11,9,7; 5, 2| 10,8; 6,4,3; 1) - \tau}&\longrightarrow -\widetilde{E}_{(7,5,3; 4, 1| 6,4; 5,3,2; 1) - \tau} = -E_{(7,5,4,3, 1| 6,5,4,3,2,1) - \tau-\rho_c} \\
+\widetilde{E}_{(10,9,7; 5, 2| 11,8; 6,4,3; 1) - \tau}&\longrightarrow +\widetilde{E}_{(6,5,3; 4, 1| 7,4; 5,3,2; 1) - \tau} = +E_{(6,5,4,3, 1| 7,5,4,3,2, 1) - \tau-\rho_c}
\end{aligned}
\end{equation}
Here the first column are some $\widetilde{E}_{w\rho}$ appearing ${\rm DI}(\aq(0))$, whose signs
are obtained by the determinant of
$w \in W(\mathfrak{l},\mathfrak{t})^1$ sending $\rho = (11,10,9;6,5|8,7;4,3,2;1)$ to $w\rho$.
The second column are the $\widetilde{E}_{\nu}$'s in Step (5) of Algorithm \ref{indexalg}, and
the equality at the last column is given by \eqref{E-tilde-mu}.

As for the other $w \in W(\mathfrak{l},\mathfrak{t})^1$ not listed in \eqref{eq-samesign}, the resulting $\widetilde{E}_{w\rho} \longrightarrow \widetilde{E}_{\nu}$ is singular and hence equal to $0$. Therefore,
$$
{\rm DI}(\aq(\lambda)) = +E_{(5,4,3, 2,1| 7,6, 5,4,3, 1) - \tau - \rho_c} - 3E_{(7,5,4,3, 1| 6,5,4,3,2,1) - \tau - \rho_c } + 3E_{(6,5,4,3,1| 7,5,4,3,2,1) - \tau - \rho_c}.
$$
\qed
\end{example}
Note that in \eqref{eq-samesign}, the sign for any fixed $E_{\mu}$ appearing on the right column is the same. This is
indeed true in general.
\begin{thm}\label{thm-DI-SUpq-Aqlambda}
 Every weakly fair $\aq(\lambda)$ such that \eqref{eq-inequality} is solvable for $(a_{ij},b_{ij})$
has nonzero Dirac index. More precisely, suppose the $\widetilde{K}$-type $E_{\mu-\rho_c}$ is obtained by applying
Algorithm \ref{indexalg} to the choice of solutions $(a_{ij},b_{ij})$ in \eqref{eq-inequality},
then $E_{\mu - \rho_c}$ appears in ${\rm DI}(\aq(\lambda))$ with multiplicity
$$\prod_{\{i < j|\mathcal{R}_{ij} \neq \phi\}} \begin{pmatrix} a_{ij} + b_{ij} \\ a_{ij} \end{pmatrix}.$$
\end{thm}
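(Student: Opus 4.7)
The plan is to apply Theorem~\ref{thm-DI-Aqlambda} (equivalently Corollary~\ref{cor-diupq}), classify which $w \in W(\mathfrak{l},\mathfrak{t})^1$ contribute to each $\widetilde{K}$-type $E_{\mu-\rho_c}$ via Algorithm~\ref{indexalg}, and verify that every contribution to a fixed $E_{\mu-\rho_c}$ carries the same sign, so that no cancellation occurs and the multiplicity is the raw count of contributing $w$'s.

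First, a summand $\det(w)\widetilde{E}_{w(\lambda+\rho)}$ is nonzero precisely when $w(\lambda+\rho)$ is $\Delta^+(\mathfrak{k},\mathfrak{t})$-regular; this forces every $r\in\mathcal{R}$ to appear exactly once among the first $p$ coordinates and exactly once among the last $q$. For each pair $(i,j)$ with $\mathcal{R}_{ij}\neq\varnothing$, recording how many $r\in\mathcal{R}_{ij}$ have their block-$i$ copy sitting in an $\alpha$-slot (this number is $a_{ij}$) and how many have their block-$j$ copy in an $\alpha$-slot (this is $b_{ij}$) reproduces exactly the system \eqref{eq-inequality}. Hence nonzero summands exist if and only if \eqref{eq-inequality} is solvable, which gives the first half of the theorem.

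Next I would count the $w$'s producing a fixed output $E_{\mu-\rho_c}$ of Algorithm~\ref{indexalg}. Fix a solution $(a_{ij},b_{ij})$, subsets $\mathcal{S}_{ij}\subseteq\mathcal{R}_{ij}$ of sizes $a_{ij}$, and a placement of the unrepeated entries of $\bigcup_i\{A_i,\dots,a_i\}\setminus\mathcal{R}$ into the remaining $\alpha_i,\beta_i$ slots. Since all entries of $\mathcal{R}_{ij}$ share the same value, varying only the $\mathcal{S}_{ij}$ (keeping sizes and the unrepeated placement fixed) leaves $\mu$ unchanged and produces $\prod_{i<j}\binom{a_{ij}+b_{ij}}{a_{ij}}$ distinct elements of $W(\mathfrak{l},\mathfrak{t})^1$. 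Conversely, any change in $(a_{ij},b_{ij})$ or in the unrepeated placement alters the multiset of first-$p$ coordinates of $\mu$, because unrepeated entries originating from distinct blocks of the $\aq$-chain have distinct values. So distinct triples map to distinct $\mu$'s, and the count above is exact.

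The main obstacle is the sign analysis. The coefficient of $E_{\mu-\rho_c}$ coming from $w$ is $\det(w_1 w)$, where $w_1\in W(\mathfrak{k},\mathfrak{t})$ makes $w_1 w(\lambda+\rho)$ dominant regular. If $w$ and $w'$ both produce $\mu$, then $u:=(w_1'w')^{-1}(w_1 w)$ lies in $\Stab_{W(\mathfrak{g},\mathfrak{t})}(\lambda+\rho)$; under \eqref{hpcondition} one has $W(\mathfrak{g},\mathfrak{t})\cong S_{p+q}$ and this stabilizer is generated by the transpositions swapping the two copies of each $r\in\mathcal{R}$. The key computation is that an elementary move $r\leftrightarrow r'$ inside some $\mathcal{S}_{ij}$ corresponds, via this identification, to the product of the transposition of the two copies of $r$ and the transposition of the two copies of $r'$, so $\det(u)=(-1)(-1)=+1$. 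Iterating this over a sequence of elementary moves shows that all $\prod_{i<j}\binom{a_{ij}+b_{ij}}{a_{ij}}$ summands carry the same sign; they therefore add up to a nonzero coefficient of $E_{\mu-\rho_c}$, so in particular $\mathrm{DI}(\aq(\lambda))\neq 0$.
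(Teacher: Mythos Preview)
Your proposal is correct and follows the same overall strategy as the paper: use Theorem~\ref{thm-DI-Aqlambda} (Corollary~\ref{cor-diupq}), identify via Algorithm~\ref{indexalg} exactly which $w\in W(\frl,\frt)^1$ contribute to a fixed $E_{\mu-\rho_c}$, count them as $\prod\binom{a_{ij}+b_{ij}}{a_{ij}}$, and then show that no cancellation occurs among these contributions.

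The only real difference is in how the sign analysis is organized. The paper treats the two sources of sign separately: Claim~1 shows that $\det(w(\mathcal{S}_{ij}))$ is independent of the choice of $\mathcal{S}_{ij}$ with $\#\mathcal{S}_{ij}=a_{ij}$, by exhibiting the difference as a permutation in $S_{p_i+q_i}\times S_{p_j+q_j}$ of determinant~$1$; Claim~2 shows that the $W(\frk,\frt)$-element making $\nu(\mathcal{S}_{ij})$ dominant has determinant independent of $\mathcal{S}_{ij}$. You instead combine them, arguing that the total sign $\det(w_1w)$ is constant because $u=(w_1'w')^{-1}(w_1w)$ lies in $\Stab_{W(\frg,\frt)}(\lambda+\rho)$, which under \eqref{hpcondition} is generated by the transpositions $\tau_r$ of the two copies of each $r\in\mathcal{R}$, and an elementary swap $r\leftrightarrow r'$ in $\mathcal{S}_{ij}$ yields $u=\tau_r\tau_{r'}$ of determinant $+1$. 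This stabilizer viewpoint is a bit more conceptual, but it is the same parity computation underneath; both approaches hinge on the fact that moving one repeated entry between the $\alpha$- and $\beta$-slots is always accompanied by a compensating move in the companion block.
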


\begin{example}\label{exam-BP}
The paper \cite{BP15} studied the Dirac cohomology of certain unipotent representations $\pi_u$. When $G = U(p,q)$, these representations correspond to the chains
$$(A_1,\dots,a_1)^{p_1,q_1} \cup (A_2,\dots,a_2)^{p_2,q_2}$$
with $A_1+a_1 \geq A_2 + a_2$ so that the corresponding $\aq(\lambda)$ is in weakly fair range, and:
\begin{itemize}
\item $\{A_1,\dots,a_1\} \subseteq \{A_2,\dots,a_2\}$, $p_1 \leq q_2$, and $q_1 \leq p_2$; or
\item $\{A_1,\dots,a_1\} \supseteq \{A_2,\dots,a_2\}$, $p_1 \geq q_2$, and $q_1 \geq p_2$.
\end{itemize}
We only focus on the first case, which has $\mathcal{R} = \mathcal{R}_{12} = \{A_1,\dots,a_1\}$. Note that since $a_{12} + b_{12} = p_1+q_1$, and $a_{12} \leq p_1$, $b_{12} \leq q_1$, the only option for $(a_{12},b_{12})$ is $(a_{12},b_{12})=(p_1,q_1)$. Therefore, each $E_{\mu-\rho_c}$ appearing in ${\rm DI}(\pi_u)$ has multiplicity $\begin{pmatrix} p_1 + q_1 \\ p_1 \end{pmatrix}$.

On the other hand, Theorem 5.3 of \cite{BP15} implies that each $E_{\nu}$ appearing in the Dirac {\it cohomology} has multiplicity $\begin{pmatrix} p_1 + q_1 \\ p_1 \end{pmatrix}$. This implies that there are no cancellations in the expression of
${\rm DI}(\pi_u) = \pi_u \otimes {\rm Spin}_G^+ - \pi_u \otimes {\rm Spin}_G^-$.\hfill\qed
\end{example}

\begin{remark}\label{rmk-thm-DI-SUpq-Aqlambda}
Conjecture 10.3 of \cite{H15} suggests that
there should be no cancellation among the $\widetilde{K}$-types when passing
from $H_D(\pi)$ to ${\rm DI}(\pi)$. We have just seen this is the case for the unipotent
representations in Example \ref{exam-BP}. Moreover, our calculation on \texttt{atlas} implies
that the conjecture  holds on small rank groups of $U(p, q)$.
\end{remark}

\begin{proof}
Fix a $\widetilde{K}$-type $E_{\mu - \rho_c}$ which is obtained as in Algorithm \ref{indexalg}, then
by the construction therein, it must come from different subsets of $\mathcal{S}_{ij}$
and $\mathcal{R}_{ij} \backslash \mathcal{S}_{ij}$ for a {\bf fixed choice of $(a_{ij},b_{ij})$}
with $\#\mathcal{S}_{ij} = a_{ij}$, $\#(\mathcal{R}_{ij} \backslash \mathcal{S}_{ij}) = b_{ij}$.

For any $\mathcal{S}_{ij}$ with $\#\mathcal{S}_{ij}= a_{ij}$ ($\begin{pmatrix} a_{ij} + b_{ij} \\ a_{ij} \end{pmatrix}$ choices in total),
there is a unique way to fill in the unrepeated entries $\alpha_i, \alpha_j, \beta_i, \beta_j$ so that
Algorithm \ref{indexalg} gives $E_{\mu - \rho_c}$. Therefore, as in \eqref{eq-samesign}, there is a total of
$\displaystyle \prod_{\{i < j\ |\ \mathcal{R}_{ij} \neq  \phi\}} \begin{pmatrix} a_{ij} + b_{ij} \\ a_{ij} \end{pmatrix}$
copies of $\pm E_{\mu - \rho_c}$ appearing in the Dirac index of $\aq(\lambda)$.
So it suffices to show that for each choice of $\mathcal{S}_{ij}$, the algorithm
gives a copy of $E_{\mu - \rho_c}$ \textbf{of the same sign}.

\medskip

We focus on a fixed $i < j$ with $\mathcal{R}_{ij} \neq \phi$. Let
$r := \#\mathcal{R}_{ij}$, $a := a_{ij}$, $b := b_{ij}$ so that $a+b = r$.
Note that by the definition of $\mathcal{R}_{ij}$ \eqref{rij},
the elements $s_1 > \dots > s_r$ in $\mathcal{R}_{ij}$ are consecutive
integers. Therefore, for any choice of
$$\{\phi_1 > \dots > \phi_a \} = \mathcal{S}_{ij}, \quad \{\psi_1 > \dots > \psi_b \} = \mathcal{R}_{ij} \backslash \mathcal{S}_{ij},$$
they must appear in the form
\begin{equation} \label{zeta}
\begin{aligned}
\nu(\mathcal{S}_{ij}) := (\dots, \overbrace{\alpha_i, \dots, \alpha_i, \phi_1 >\dots > \phi_a, \alpha_i', \dots, \alpha_i'}^{p_i}, \dots, \overbrace{\alpha_j, \dots, \alpha_j, \psi_1 > \dots > \psi_b, \alpha_j', \dots, \alpha_j'}^{p_j}, \dots | \\
\dots, \overbrace{\beta_i, \dots, \beta_i, \psi_1 > \dots > \psi_b, \beta_i', \dots, \beta_i'}^{q_i}, \cdots, \overbrace{\beta_j, \dots, \beta_j, \phi_1 > \dots > \phi_a, \beta_j', \dots, \beta_j'}^{q_j}, \dots ),
\end{aligned}
\end{equation}
where the amount of $\alpha_i, \alpha_i', \alpha_j, \alpha_j', \beta_i, \beta_i', \beta_j, \beta_j'$ entries
are equal for all choices of $\mathcal{S}_{ij}$, whose values are determined by our fixed choice of $E_{\mu - \rho_c}$.

\smallskip

Our proof will follow from the following claims.

\medskip

\noindent \underline{\bf Claim 1}\ \emph{For each $\mathcal{S}_{ij} \subset \mathcal{R}_{ij}$, let
$w(\mathcal{S}_{ij}) \in W(\mathfrak{l},\mathfrak{t})^1$ be such that the translation principle gives
$\widetilde{E}_{w(\mathcal{S}_{ij})\rho} \longrightarrow \widetilde{E}_{\nu(\mathcal{S}_{ij})}$.
Then $\det(w(\mathcal{S}_{ij}))$ is of the same sign for all $\mathcal{S}_{ij}$ with $\#\mathcal{S}_{ij} = a$.}

\noindent {\bf Proof of Claim 1.}\ Suppose $\mathcal{S}_{ij}' = \{\kappa_1 >\dots > \kappa_a\}$ and $\mathcal{R}_{ij} \backslash \mathcal{S}_{ij}' = \{\eta_1 > \dots > \eta_b\}$ be such that
\begin{equation} \label{zetaprime}
\begin{aligned}
\nu(\mathcal{S}_{ij}') = (\dots, \overbrace{\alpha_i, \dots, \alpha_i, \kappa_1 >\dots > \kappa_a, \alpha_i', \dots, \alpha_i'}^{p_i}, \dots, \overbrace{\alpha_j, \dots, \alpha_j, \eta_1 > \dots > \eta_b, \alpha_j', \dots, \alpha_j'}^{p_j}, \cdots | \\
\dots, \overbrace{\beta_i, \dots, \beta_i, \eta_1 > \dots > \eta_b, \beta_i', \dots, \beta_i'}^{q_i}, \dots, \overbrace{\beta_j,
\dots, \beta_j, \kappa_1 > \dots > \kappa_a, \beta_j', \dots, \beta_j'}^{q_j}, \dots ),
\end{aligned}
\end{equation}
then one has
$$\{\kappa_1 >\dots > \kappa_a,\eta_1 > \dots > \eta_b\} = \{\phi_1 > \dots > \phi_a, \psi_1 > \dots > \psi_b\} = \{s_1 > \dots > s_r\}.$$
So $w(\mathcal{S}_{ij})$ and $w(\mathcal{S}'_{ij})$ differ by a Weyl group element in $S_{p_i + q_i} \times S_{p_j + q_j}$ of the form
\begin{align*}
&(\overbrace{*,\cdots,*,\phi_1, \dots, \phi_a,*,\cdots,*}^{p_i},\overbrace{*,\cdots,*,\psi_1, \dots, \psi_b,*,\cdots,*}^{q_i}) \\
\rightarrow\ & (\overbrace{*,\cdots,*,\kappa_1, \dots, \kappa_a,*,\cdots,*}^{p_i},\overbrace{*,\cdots,*,\eta_1, \dots, \eta_b,*,\cdots,*}^{q_i}) \in S_{p_i+q_i}
\end{align*}
\begin{align*}
&(\overbrace{*,\cdots,*,\psi_1, \dots, \psi_b,*,\cdots,*}^{p_j},\overbrace{*,\cdots,*,\phi_1, \dots, \phi_a,*,\cdots,*}^{q_j}) \\
\rightarrow\ & (\overbrace{*,\cdots,*,\eta_1, \dots, \eta_b,*,\cdots,*}^{p_j},\overbrace{*,\cdots,*,\kappa_1, \dots, \kappa_a,*,\cdots,*}^{q_j})
\in S_{p_j+q_j}
\end{align*}
where the entries marked with $*$ are unchanged. It is not hard to see this Weyl group element has determinant $1$. Therefore,
one concludes that $\det(w(\mathcal{S}_{ij})) = \det(w(\mathcal{S}'_{ij}))$ for all $\#\mathcal{S}_{ij} = \#\mathcal{S}'_{ij} = a$.


\medskip

\noindent \underline{\bf Claim 2}\ \emph{The Weyl group elements in $S_p \times S_q$ translating $\nu(\mathcal{S}_{ij})$, $\nu(\mathcal{S}_{ij}')$ to $\mu$ have the same determinant.}

\noindent {\bf Proof of Claim 2.}\
Firstly, we translate $\nu(\mathcal{S}_{ij})$ and $\nu(\mathcal{S}_{ij}')$ by the same element in $S_p \times S_q$ such that
the $\phi, \psi$-entries and $\kappa, \eta$-entries of $\nu(\mathcal{S}_{ij})$ and $\nu(\mathcal{S}_{ij}')$ respectively appears at the
first $r = a+b$ coordinates. So we are left to check the Weyl group element in $S_{a+b} \times S_{a+b}$
$$(\phi_1,\dots,\phi_a,\psi_1,\dots,\psi_b | \psi_1,\dots,\psi_b, \phi_1,\dots,\phi_a) \to
(\kappa_1,\dots,\kappa_a,\eta_1,\dots,\eta_b | \eta_1,\dots,\eta_b, \kappa_1,\dots,\kappa_a)$$
has determinant $1$. Indeed, both elements above can be mapped to the dominant element $(s_1,\dots, s_r | s_1,\dots,s_r)$ by
\begin{align*}
&(\phi_1,\dots,\phi_a,\psi_1,\dots,\psi_b | \psi_1,\dots,\psi_b, \phi_1,\dots,\phi_a) \\ \stackrel{(e,w_{b,a})}{\longrightarrow}
&(\phi_1,\dots,\phi_a,\psi_1,\dots,\psi_b | \phi_1,\dots,\phi_a,\psi_1,\dots,\psi_b) \\
\stackrel{(w_{\phi,\psi},w_{\phi,\psi})}{\longrightarrow} &(s_1, \dots, s_r | s_1, \dots, s_r),
\end{align*}
where $\det(e,w_{b, a}) = (-1)^{a b}$ and $\det(w_{\phi,\psi},w_{\phi,\psi}) = \det(w_{\phi,\psi})^2 = 1$.
Replacing $w_{\phi,\psi}$ with $w_{\kappa,\eta}$ above, the argument is identical for $(\kappa_1,\dots,\kappa_a,\eta_1,\dots,\eta_b | \eta_1,\dots,\eta_b, \kappa_1,\dots,\kappa_a)$,
both of which involve a Weyl group element of determinant $(-1)^{a \dot b}$. Hence the claim follows.

\medskip

To conclude, for a fixed $\widetilde{K}$-type $\widetilde{E}_{\mu} = E_{\mu - \rho_c}$,
then each $w(\mathcal{S}_{ij})$ (with $\#\mathcal{S}_{ij}= a$) on the first column
of Equation \eqref{eq-samesign} has the same sign by Claim 1.
Also, the equality on the last column of \eqref{eq-samesign} makes
the same sign change thanks to Claim 2. Consequently,
each $E_{\mu-\rho_c}$ in \eqref{eq-samesign} contributes to the
Dirac index of $\aq(\lambda)$ with the same sign, and the result follows.
\end{proof}

\section{Vogan's conjecture on $\widehat{U(p, q)}$}\label{sec-Vogan-conj}

In this section, we provide some calculations on \texttt{atlas} relevant to Conjecture \ref{conj-Vogan} and Conjecture \ref{conj-diracseries}.
\begin{example}\label{exam-su33}
Let us single out all the irreducible representations of $SU(3, 3)$ with infinitesimal character $\rho_c$ ($=$ \texttt{[2, 2, 1, 1, 0]} in \texttt{atlas}) and full support as follows. Here some outputs are omitted for convenience.
\begin{verbatim}
G:SU(3,3)
set all=all_parameters_gamma(G, [2, 2, 1, 1, 0])
#all
Value: 38
for p in all do if #support(x(p))=5 and is_unitary(p) then prints(p) fi od
final parameter(x=210,lambda=[4,5,3,2,1]/1,nu=[2,2,1,1,0]/1)
final parameter(x=205,lambda=[4,4,2,2,0]/1,nu=[2,2,1,1,0]/1)
final parameter(x=204,lambda=[4,4,2,2,0]/1,nu=[2,2,1,1,0]/1)
\end{verbatim}
One can check that the last two parameters can be obtained by
restricting the $\aq(\lambda)$-modules in $U(3,3)$ with $\aq$-chains
$$\left(3,2,1\right)^{1,2} \cup \left(3,2,1\right)^{2,1}, \quad \left(3,2,1\right)^{2,1} \cup \left(3,2,1\right)^{1,2}$$
to $SU(3,3)$.

We note that the first parameter does not correspond to a weakly fair $\aq(\lambda)$. Indeed,
by checking its associated variety by \texttt{atlas} \cite{AV,At}, it is a union of four $K$-nilpotent
orbits in $\mathfrak{p}$. Since all nonzero $\aq(\lambda)$-modules in $U(p,q)$ has associated variety
equal to the closure of a single $K$-nilpotent orbit (c.f. \cite{T2}), this cannot be an $\aq(\lambda)$-module
and hence this gives a counter-example of Conjecture \ref{conj-Vogan}. \hfill\qed
\end{example}

To account for the first module in the previous example, let $G = SU(p,p)$ and
$$\Sigma_p := {\rm Ind}_{P}^{G}(\sigma^{p}), \quad \Sigma_p' := {\rm Ind}_{P}^{G}(\sigma^{p+1}),$$
 where $P = MAN$ is the Siegel parabolic of $G$, and $\sigma$
is the nontrivial character of the component group of $M$ ($\cong \mathbb{Z}/2\mathbb{Z}$).
The associated variety of $\Sigma_p$ and $\Sigma_p'$ are both equal to the union of all $K$-real forms
of the complex nilpotent orbit corresponding to the partition $[2^p]$ ($p+1$ orbits in total).

By \cite{MT}, $\Sigma_p$ is reducible for all $p$,
whose composition factors consist of $\aq(\lambda)$-modules in $U(p,p)$ of the form
$$\left(p,\dots,1\right)^{i,p-i} \cup \left(p,\dots,1\right)^{p-i,i}, \quad 0 \leq i \leq p$$
restricted to $G$, all appearing with multiplicity one.
On the other hand, \cite[Section 5]{J} implies that $\Sigma_p'$
is irreducible, and the first parameter in Example \ref{exam-su33}
corresponds to $\Sigma_3'$. This can be verified by \texttt{atlas} as follows:
\begin{verbatim}
G:SU(3,3)
set P=KGP(G,[0,1,3,4])[9]
set L=Levi(P)
L
Value: disconnected quasisplit real group with Lie algebra 'sl(3,C).gl(1,R)'
real_induce_irreducible(trivial(L),G)
Value:
1*parameter(x=210,lambda=[4,5,3,2,1]/1,nu=[2,2,1,1,0]/1) [0]
\end{verbatim}

To study the Dirac cohomology of $\Sigma_p'$, note that its infinitesimal character is conjugate to $\rho_c$,
and its $K$-types are of the form
\begin{equation} \label{eq-sigmaktypes}
\Sigma_p'|_K = \begin{cases}
\displaystyle\bigoplus_{a_i \in \mathbb{Z},\ a_1 \geq \dots \geq a_p} E_{(a_1, \dots, a_p |-a_p, \dots, -a_1)} & \text{if}\ p\ \text{is odd} \\
\displaystyle\bigoplus_{a_i \in \mathbb{Z},\ a_1 \geq \dots \geq a_p} E_{(a_1 + \frac{1}{2}, \dots, a_p + \frac{1}{2}| -a_p - \frac{1}{2}, \dots, -a_1- \frac{1}{2})} & \text{if}\ p\ \text{is even}
\end{cases}
\end{equation}
by Frobenius reciprocity. Therefore, by Theorem \ref{thm-HP}, if $H_D(\Sigma_p') \neq 0$, then
there must be a $K$-type $E_{\mu}$ in $\Sigma_p'$, and a subset $\langle \Phi \rangle \subset \Delta^+(\mathfrak{p},\mathfrak{t})$ such that
$$\mu - \rho_n + \langle \Phi \rangle = w\rho_c - \rho_c$$
is the highest weight of a $\widetilde{K}$-type appearing in $H_D(\Sigma_p')$. This forces $w\rho_c=\rho_c$
and
\begin{equation} \label{eq-p-parity}
\mu + \langle \Phi \rangle = \rho_n.
\end{equation}
Note that $\rho_n$ is half the sum of all roots of the form $\epsilon_i - \epsilon_j$ with $1 \leq i \leq p < j \leq 2p$, which has $p^2$ roots in total. Therefore, if $p$ is odd, then the coordinates of $\rho_n$ consists of half-integers. However, by the choice of $\mu$ given in \eqref{eq-sigmaktypes}, the coordinates of left hand side of \eqref{eq-p-parity} must be integers. Therefore, $\Sigma_p'$ do not appear in the Dirac series for odd $p$ and similarly for even $p$. In other words, $\Sigma_p'$ does not violate Conjecture \ref{conj-diracseries}.

\bigskip

\centerline{\scshape Funding}
Dong was supported by the National Natural Science Foundation of China (grant 11571097, 2016-2019). Wong is supported by the National Natural Science Foundation of China (grant 11901491) and the Presidential Fund of CUHK(SZ).

\medskip
\centerline{\scshape Acknowledgements}
We thank Prof. Matumoto and Prof. Trapa sincerely for suggesting \cite{J} and \cite{MT} to the authors, along with many very helpful discussions.

\end{document}